\theoremstyle{plain}
\newtheorem*{theorem*}{Theorem}
\newtheorem{theorem}{Theorem}[section]
\newtheorem{lemma}[theorem]{Lemma}
\newcommand{\be}{\begin{equation}}
\newcommand{\ee}{\end{equation}}
\newcommand{\parallelsum}{\mathbin{\!/\mkern-5mu/\!}}
\newcommand{\lt}{\left}
\newcommand{\rt}{\right}
\newcommand{\goto}{\rightarrow}
\newcommand{\R}{\mathbb{R}}
\newcommand{\e}{\epsilon}
\newcommand{\hZ}{\hat{Z}}
\newcommand{\hf}{\hat{f}}
\newcommand{\hF}{\hat{F}}
\newcommand{\uen}{u^{\e, n}}
\newcommand{\us}{u^{\e, n, \sigma}}
\newcommand{\Gs}{G_{\e, n, \sigma}}
\newcommand{\vp}{\varphi}
\newcommand{\uk}{u^{\e_k, n_k}}
\newcommand{\ud}{\underline{d}}
\newcommand{\txn}{\partial_{x_0}^n}
\newcommand{\tyn}{\partial_{y_0}^n}
\newcommand{\nue}{\nu_{x^\e}}
\theoremstyle{definition}
\newtheorem{defin}[theorem]{Definition}
\newtheorem{remark}[theorem]{Remark}
\numberwithin{equation}{section}
\numberwithin{equation}{section}
\begin{document}
\setlength{\baselineskip}{1.2\baselineskip}

\title[Motion of level set by general curvature]
{Motion of level set by general curvature}

\author{Ling Xiao}
\address{Department of Mathematics, Rutgers University,
Piscataway, NJ 08854}
\email{lx70@math.rutgers.edu}

\begin{abstract}
In this paper, we study the motion of level sets by general curvature. The difficulty in this setting is for a general curvature function, it's only well defined in an admissible cone. In order to extend the existence result to outside the cone we introduce a new approximation function $\hf^n$ (see \eqref{ew1}). Moreover, using the idea in \cite{CS}, we give an elliptic approach for the Ben-Andrews' non-collapsing result in fully nonlinear curvature flows. We hope this approach can be generalized to a wider class of elliptic equations.

\end{abstract}

\maketitle

\section{Introduction}
\label{int}
\setcounter{equation}{0}
In this paper, we are going to study a level set approach for general curvature flow. More specifically,
given an initial hypersurface $\Gamma_0,$ select some continuous function $g:\R^n\goto\R$ so that
\be\label{1.1}
\Gamma_0=\{x\in\R^n|g(x)=0\}.
\ee
We are going to consider the parabolic PDE
\be\label{1.2}
\begin{aligned}
&u_t=|\nabla u|F\lt(A[\Gamma_t]\rt)=F\lt(\gamma^{ik}u_{kl}\gamma^{lj}\rt)\\
&u=g\,\,\mbox{on $\R^n\times\{t=0\}$},
\end{aligned}
\ee
where $\gamma^{ij}=\delta_{ij}-\frac{u_iu_j}{|Du|^2},$ $\Gamma_t\equiv\{x\in\R^n|u(x, t)=0\},$ and $A[\Gamma_t]$ denotes the second fundamental form of the hypersurface $\Gamma_t.$ The PDE \eqref{1.2} says that each level set of $u$
evolves according to a general curvature function, at least at the region where $u$ is smooth and $|Du|\neq 0.$
One can derive that when $\Gamma_t$ evolves according to its mean curvature then
\[F\lt(\gamma^{ik}u_{kl}\gamma^{lj}\rt)=(\delta_{ij}-u_iu_j/|Du|^2)u_{ij}.\]

Evans and Spruck in their series papers (see \cite{ES1, ES2, ES3, ES4}) study existence, uniqueness, and regularities of solutions of level set mean curvature flow. They also study the relationship between level set mean curvature flow and classical mean curvature flow. Around the same time, Chen, Giga and Goto study the existence and uniqueness of generalized level set mean curvature flow (see \cite{CGG}). Following these ideas, generalized motion of noncompact hypersurfaces
with normal velocity depending on the normal direction and the curvature
tensor have also been studied by Ishii and Souganidis, and Goto (see \cite{IS, G}). In their works, they study equation
\be\label{1.3}
u_t-F(Du, D^2u)=0,
\ee
where function $F$ is globally elliptic, i.e. $F(p, X)\geq F(p, Y)$ if $X\geq Y.$ Under the global ellipticity assumption, in \cite{CMP} Chambolle, Morini, and Ponsiglione  study general level set nonlocal curvature motions. However, the global ellipticity condition isn't satisfied when
$\Gamma_t$ moves by $k$-th mean curvature or curvature quotient.

In this paper, we are going to prove the existence and uniqueness result for motion by general curvature which is not globally elliptic.
Our main results are the following:

\begin{theorem}\label{thint1}
Assume $g:\R^n\goto\R$ is continuous and satisfying that
\[\mbox{$g$ is constant on $\R^n\cap\{|x|\geq S\}.$}\]
Then, there exists a weak solution $u$ of \eqref{ws1}, such that
\[\mbox{$u$ is constant on $\R^n\times[0, \infty)\cap\{|x|+t\geq R\}.$}\]
\end{theorem}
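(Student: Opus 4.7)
The plan is to realize $u$ as a limit of classical solutions of a triply-regularized problem. Let $c_\infty := g\big|_{\{|x|\geq S\}}$ denote the constant value of $g$ at infinity. First I would approximate the singular, non-globally-elliptic system \eqref{1.2} by
\[
\begin{cases}
\us_t = \hF^n\!\lt(\gamma^{ik}\us_{kl}\gamma^{lj}\rt) + \e\,\Delta\us & \mbox{in }\R^n\times(0,\infty),\\
\us(\cdot,0) = g^\sigma,
\end{cases}
\]
where $\hF^n$ is the global elliptic extension of $F$ introduced in \eqref{ew1}, the parameter $\e > 0$ introduces uniform parabolicity, the parameter $\sigma > 0$ regularizes $\gamma^{ij}$ so as to remove the degeneracy at $|D\us|=0$ (for instance by replacing $|D\us|^2$ by $|D\us|^2+\sigma^2$), and $g^\sigma$ is a smooth mollification of $g$ agreeing with $c_\infty$ outside $\{|x|\leq S\}$. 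Standard quasilinear parabolic theory then produces a smooth solution $\us$ for each triple $(\e,n,\sigma)$, and the constants $\sup g$ and $\inf g$, which are stationary sub/supersolutions, give a uniform $L^\infty$ bound via comparison.

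The decisive a priori estimate is a finite-speed-of-propagation bound, uniform in $(\e,n,\sigma)$, asserting that $\us(x,t) = c_\infty$ whenever $|x| + t \geq R$, for some $R$ depending only on $S$ and $F$. The natural strategy is to build an explicit radial barrier of the form $\Phi(|x| - \alpha t)$ that coincides with $g^\sigma$ at $t=0$ on $\{|x|\leq S\}$, equals $c_\infty$ on $\{|x|-\alpha t \geq S\}$, and is a super/subsolution of the regularized PDE. Choosing $\alpha$ large enough in terms of the growth of $\hF^n$ near its constant regime, and verifying that the $\e\Delta$-term and the $\sigma$-correction in $\gamma^{ij}$ do not force $\alpha$ to blow up, yields the propagation bound uniformly in the approximation parameters. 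Combined with the $L^\infty$ bound and Bernstein-type interior Lipschitz estimates, this provides enough compactness to extract a limit.

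Passing to the limit $(\e,n,\sigma)\to 0$ via the Barles--Perthame half-relaxed limit method, and identifying the upper and lower semicontinuous envelopes of $\us$ as viscosity sub- and supersolutions of \eqref{ws1}, produces the desired weak solution $u$; the finite-propagation bound passes to the limit and yields the stated constancy on $\{|x|+t\geq R\}$. I expect the main obstacle to be the uniform barrier construction: because $\hF^n$ is designed in \eqref{ew1} primarily to restore global ellipticity rather than to reproduce $F$ quantitatively outside the admissible cone, one must show that the speed $\alpha$ remains bounded as $n \to \infty$, i.e. that the extension does not spuriously accelerate propagation from the constant state. Once this is in hand, the remaining passage to the limit is a standard adaptation of the Evans--Spruck framework to the level-set flow by general curvature.
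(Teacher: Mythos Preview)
Your overall architecture---regularize, build a radial barrier, pass to the limit---matches the paper's, but the barrier you propose does not do the job, and the obstacle you flag is not the real one.

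The conclusion you must reach is that $u=c_\infty$ on the \emph{cone} $\{|x|+t\geq R\}$, i.e.\ the non-constant region is contained in a ball of radius $R-t$ and hence becomes empty by time $R$. A traveling-wave profile $\Phi(|x|-\alpha t)$ with $\alpha>0$ (as your ``choose $\alpha$ large enough'' indicates) forces the constant region to be $\{|x|\geq S+\alpha t\}$, which retreats to infinity: for large $t$ and small $|x|$ one has $|x|+t\geq R$ but $|x|-\alpha t<S$, so this barrier can never yield the stated cone. The underlying geometric reason is that a sphere of radius $r$ moves under the flow with speed $f(1/r,\dots,1/r)=1/r$, not with constant speed; a constant-speed front simply cannot model the extinction that the theorem asserts. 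If instead you try $\alpha<0$ (so the front advances inward), the subsolution inequality $-\alpha\Phi'\leq \hat F^n(\gamma^{\e}\,D^2\Phi\,\gamma^{\e})\approx \tfrac{\Phi'}{|x|}f(1,\dots,1,0)$ fails for $|x|$ large.

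The paper's fix is to use the parabolically-scaled barrier $v(x,t)=\varphi\big(\tfrac{|x|^2}{2}+c_0 t\big)$ with $\varphi(s)=(s-2)^3_-$ and $c_0\leq f(0,1,\dots,1)$; its level sets are spheres shrinking at the correct rate. A direct computation then gives
\[
v_t-\hat F^n\!\big(\gamma^{\e ik}v_{kl}\gamma^{\e lj}\big)\leq C(n)\,\e^{1/2},
\]
so $\omega^{\e,n}:=v-C(n)t\,\e^{1/2}$ is a genuine subsolution of the approximating equation, and comparison yields $v\leq u$ after sending $\e_k\to 0$, $n_k\to\infty$ along a subsequence with $C(n_k)\e_k^{1/2}\to 0$. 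This coupling of $\e$ and $n$---not a uniform bound on a wave speed $\alpha$---is the actual delicate point; the extension $\hat F^n$ does introduce an $n$-dependent error, but it is absorbed by taking $\e$ small relative to $n$. Your $\e\Delta$ term and Barles--Perthame limits are acceptable alternatives to the paper's $\gamma^{\e}$-regularization and Lipschitz-compactness route, but neither substitutes for the correct barrier shape.
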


Moreover, we give a new approach to Ben-Andrews' non-collapsing results (see \cite{A, ALM}). Our approach is an elliptic approach based on the idea of \cite{CS}.
Combining Ben-Andrews' non-collapsing results with our result in Section \ref{secci}, we are able to extend the non-collapsing results to the weak flow:
\begin{theorem}\label{thew2}
Let $\Gamma_0$ be a hypersurface satisfies the following conditions: $\Gamma_0$ can be approached by a sequence of smooth hypersurfaces, which have positive general curvature and satisfy $\alpha$-Andrews condition. If $\{\Gamma_t\}$ is a compact level set general curvature flow with initial hypersurface $\Gamma_0,$ then $\{\Gamma_t\}$ is an $\alpha$-Andrews level set flow.
\end{theorem}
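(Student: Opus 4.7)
The plan is to lift the non-collapsing property from smooth flows to the weak level set flow by approximation, combining the elliptic non-collapsing result from Section~\ref{secci} (applied in the smooth regime) with the stability of the weak solutions produced by Theorem~\ref{thint1}.

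By hypothesis, fix a sequence of smooth hypersurfaces $\Gamma_0^k$, each with positive general curvature and each satisfying the $\alpha$-Andrews condition, converging to $\Gamma_0$ in Hausdorff distance. Associate to each $\Gamma_0^k$ a continuous function $g_k\colon\R^n\goto\R$ that vanishes exactly on $\Gamma_0^k$, is constant outside a fixed large ball, and satisfies $g_k\goto g$ uniformly. Let $u_k$ be the weak solution produced by Theorem~\ref{thint1} with initial data $g_k$. By the comparison principle for weak solutions, $u_k\goto u$ locally uniformly, and this gives Hausdorff convergence of the zero-level sets of $u_k$ to $\Gamma_t$ at each time $t$ for which the limiting flow does not fatten.

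For each fixed $k$, short-time existence supplies a classical smooth flow $\{\Gamma_t^k\}_{t\in[0,T_k)}$ starting from $\Gamma_0^k$, and by uniqueness this coincides with the zero level set of $u_k$ on that interval. The elliptic non-collapsing result of Section~\ref{secci}, which is the analogue of Ben-Andrews' theorem in this setting, then forces $\Gamma_t^k$ to retain the $\alpha$-Andrews condition throughout $[0,T_k)$. Since the $\alpha$-Andrews condition amounts to the existence of interior and exterior tangent balls of a specified radius, it is closed under Hausdorff convergence of compact hypersurfaces. Passing to the limit $k\goto\infty$ therefore yields the $\alpha$-Andrews property for $\Gamma_t$ at every $t<\liminf_k T_k$.

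The principal obstacle is extending the conclusion past times at which every smooth approximation has already become singular. To cover these times I would recast the $\alpha$-Andrews condition as a two-point inequality on the level set function $u_k$, in the spirit of Section~\ref{secci}, and verify that this inequality is preserved as a viscosity inequality by the degenerate parabolic operator in \eqref{1.2} for all positive times, irrespective of smoothness. Stability in $k$ would then transfer the inequality to $u$ itself, yielding $\alpha$-Andrews on every regular level set. The delicate steps will be identifying the two-point formulation compatible with the anisotropic, degenerate parabolic nature of \eqref{1.2}, and ensuring that the property is not lost at fattening instants, which may require either a non-fattening input drawn from the $\alpha$-Andrews condition itself or a separate geometric argument to that effect.
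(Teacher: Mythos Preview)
Your overall strategy---approximate, apply non-collapsing to the smooth flows, pass to the limit via Theorem~\ref{thws1}---aligns with what the paper indicates; the paper does not spell out a proof but simply cites Theorem~\ref{thws1} and Theorem~\ref{thci0} together with the non-collapsing theorem.

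The gap in your argument is precisely the obstacle you yourself flag, and the paper's machinery already dissolves it. You refer to the non-collapsing result as living in Section~\ref{secci}, but that section instead contains the separation-of-variables representation: Theorem~\ref{thci0} shows that for each smooth $\Gamma_0^k$ with positive general curvature the weak level set flow is $u_k(x,t)=v_k(x)+t$, where $v_k$ solves the \emph{stationary} elliptic problem \eqref{ci2}--\eqref{ci3} on the domain $U_k$ bounded by $\Gamma_0^k$. Consequently $\Gamma_t^k=\{v_k=-t\}$ for \emph{every} $t$ up to the extinction time $t^*_k=\|v_k\|_{L^\infty}$, not merely on some classical smooth interval $[0,T_k)$. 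The elliptic non-collapsing result, Theorem~\ref{thnc1} in Section~\ref{secnc}, is formulated exactly for the stationary equation \eqref{nc1} and asserts the $\alpha$-Andrews condition for \emph{all} level sets of the solution simultaneously; applied to $v_k$ (or, if regularity is an issue, to the smooth approximations $v^{\e,n}$ built in the proof of Theorem~\ref{thci0}, followed by a limit) it delivers the $\alpha$-Andrews condition for $\Gamma_t^k$ at every time, with no reference to singularities of a classical parabolic flow. Your proposed parabolic two-point viscosity argument and the separate non-fattening input are therefore unnecessary: the entire point of the elliptic reformulation via Theorem~\ref{thci0} is that it captures all time slices at once. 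After that, the passage $k\to\infty$ via Theorem~\ref{thws1} and the viscosity Andrews notion of Definition~\ref{denc3} proceeds as you describe.
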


\section{Definition and properties of weak solutions}
\label{secws}
\subsection {Weak solutions}\label{subws}
\setcounter{equation}{0}
We study the following equation:
\be\label{ws1}
\begin{aligned}
&u_t=|\nabla u|F\lt(A[\Gamma_t]\rt)=F\lt(\gamma^{ik}u_{kl}\gamma^{lj}\rt)\\
&u=g\,\,\mbox{on $\R^n\times\{t=0\}$},
\end{aligned}
\ee
where $\gamma^{ij}=\delta_{ij}-\frac{u_iu_j}{|Du|^2}.$
Now, let $\kappa[\gamma^{ik}u_{kl}\gamma^{lj}]=(\kappa_1, \cdots, \kappa_n)$ be the eigenvalues of
$\{\gamma^{ik}u_{kl}\gamma^{lj}\},$ and let $f(\kappa)=F\lt(\gamma^{ik}u_{kl}\gamma^{lj}\rt).$
We assume the function $f$ satisfies the following fundamental structure conditions:\\
\be\label{ws6}
f_i(\kappa)\equiv\frac{\partial f(\kappa)}{\partial\kappa_i}>0\,\,\mbox{in $K$, $1\leq i\leq n$,}
\ee
\be\label{ws7}
\mbox{$f$ is a concave function in $K$,}
\ee
\be\label{ws8}
\mbox{$f>0$ in K, $f=0$ on $\partial K$,}
\ee
where $K\subset\R^n$ is an open symmetric convex cone such that
\be\label{ws9}
\bar{K}^+_n:=\{\kappa\in\R^n:\,\,\mbox{each component $\kappa_i\geq 0$}\}\varsubsetneq\bar{K}.
\ee
In addition, we shall assume that $f$ is normalized
\be\label{ws10}
f(1, \cdots, 1)=1,
\ee
satisfies the more technical assumptions
\be\label{ws11}
\mbox{$f$ is homogeneous of degree one.}
\ee

\begin{defin}\label{wsde1}
We say a function $\phi\in C^\infty(\R^{n+1})$ is $\textit{admissible}$ on a domain $D$ if and only if
for any $x\in D$ we have,
\be\label{ws2}
\left\{
\begin{aligned}
&F\lt(\gamma^{ik}\phi_{kl}\gamma^{lj}\rt)> 0\\
&\mbox{if $D\phi\neq 0$},\\
\end{aligned}
\right.
\ee
and
\be\label{ws3}
\left\{
\begin{aligned}
&F\lt(\gamma^{ik}_\eta\phi_{kl}\gamma^{lj}_\eta\rt)>0\\
&\mbox{for some $\eta\in\R^n$ with $|\eta|\leq 1,$ if $D\phi= 0$, },\\
\end{aligned}
\right.
\ee
where $\gamma^{ij}_{\eta}=\delta_{ij}-\eta_i\eta_j.$
\end{defin}

We will use  $\Lambda(D)$ to denote the class of $\textit{admissble}$ function in domain D.

\begin{defin}\label{wsde2}
A function $u\in C(\R^n\times[0, \infty) \cap L^{\infty}(\R^n\times[0, \infty))$ is a weak subsolution (supersolution) of
\eqref{ws1} provided that for any $\phi\in C^{\infty}(\R^{n+1})\cap\Lambda(D),$ where $D$ is an open set in $\R^n\times[0, \infty),$ if
$u-\phi$ has a local maximum (minimum) at a point $(x_0, t_0)\in D,$ then
at this point
\be\label{ws4}
\left\{
\begin{aligned}
&\phi_t\leq (\geq) F\lt(\gamma^{ik}\phi_{kl}\gamma^{lj}\rt)\\
&\mbox{if $D\phi\neq 0$},\\
\end{aligned}
\right.
\ee
and
\be\label{ws5}
\left\{
\begin{aligned}
&\phi_t\leq (\geq) F\lt(\gamma^{ik}_\eta\phi_{kl}\gamma^{lj}_\eta\rt)\\
&\mbox{for some $\eta\in\R^n$ with $|\eta|\leq 1,$ if $D\phi= 0$}.\\
\end{aligned}
\right.
\ee
\end{defin}
It will be convenient to have at hand an alternative definition. We write
$z=(x, t),$ $z_0=(x_0, t_0),$ and below implicitly sum $i, j$ from 1 to n.
\begin{defin}\label{wsde3}
A  function $u\in C(\R^n\times [0, \infty))\cap L^{\infty}(\R^n\times[0, \infty))$
is a weak subsolution (supersolution) of \eqref{ws1} if whenever $(x_0, t_0)\in\R^n\times(0, \infty)$ and
\be\label{ws13}
u(x, t)\leq (\geq) u(x_0, t_0)+p\cdot(x-x_0)+q(t-t_0)+\frac{1}{2}(z-z_0)^\top R(z-z_0)
+o(|z-z_0|^2)\,\,\,\mbox{as $z\goto z_0$}
\ee
for some $p\in \R^n,$ $q\in \R,$ and $R=\{r_{ij}\}\in S^{(n+1)\times (n+1)}$
then
\be\label{ws14}
q\leq (\geq) F(\gamma^{ik}_pr_{kl}\gamma_p^{lj})\,\,\mbox{if $p\neq 0$ and $\kappa[\gamma^{ik}_pr_{kl}\gamma^{lj}_p]\in K,$}
\ee
and
\be\label{ws15}
q\leq (\geq) F(\gamma^{ik}_\eta r_{kl}\gamma_\eta^{lj})
\ee
for some $|\eta|\leq 1$ if $p=0$ and $\kappa[\gamma^{ik}_\eta r_{kl}\gamma_\eta^{lj}]\in K.$
\end{defin}

\begin{defin}\label{wsde4}
A function  $u\in C(\R^n\times[0, \infty)) \cap L^{\infty}(\R^n\times[0, \infty))$ is a weak solution of
\eqref{ws1} provided $u$ is both a weak subsolution and a weak supersolution.
\end{defin}

\subsection{Properties of weak solutions}
\label{subpws}
\begin{theorem}\label{thws1}
(i) Assume $u_k$ is a weak solution of \eqref{ws1} for $k=1, 2, \cdots$
and $u_k\goto u$ bounded and locally uniformly on $\R^n\times[0, \infty).$
Then $u$ is a weak solution.\\
(ii) An analogous assertion holds for weak subsolutions and supersolutions.
\end{theorem}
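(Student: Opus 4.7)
The plan is to reduce part (i) to part (ii) --- a weak solution is by Definition \ref{wsde4} simultaneously a weak sub- and supersolution, so stability of solutions follows from stability of sub/super solutions separately --- and then to prove the subsolution case of (ii), the supersolution argument being identical modulo sign reversals. The overall scheme is the classical viscosity-stability proof, modified to accommodate the two-branch structure of Definition \ref{wsde2} where a vanishing gradient $D\phi$ forces introduction of an auxiliary unit vector $\eta$. Fix $\phi \in C^\infty(\R^{n+1}) \cap \Lambda(D)$ with $u - \phi$ attaining a local maximum at $(x_0, t_0) \in D$. First I would perturb $\phi$ by $|x - x_0|^4 + (t - t_0)^4$ to produce $\tilde\phi$ for which $u - \tilde\phi$ has a strict local maximum at $(x_0, t_0)$; since the perturbation vanishes to fourth order, the $1$- and $2$-jet of $\tilde\phi$ at $(x_0, t_0)$ is unchanged, and openness of the condition $\kappa \in K$ keeps $\tilde\phi \in \Lambda$ on a small closed ball $\bar B$ around $(x_0, t_0)$ on which the strict maximum is realized.

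Local uniform convergence of $u_k \to u$ on $\bar B$ then yields interior maximizers $(x_k, t_k) \in \bar B$ of $u_k - \tilde\phi$ with $(x_k, t_k) \to (x_0, t_0)$. Applying the weak-subsolution property of $u_k$ at each $(x_k, t_k)$ supplies a vector $\eta_k \in \bar B_1(0)$ --- taken as $D\tilde\phi(x_k,t_k)/|D\tilde\phi(x_k,t_k)|$ when $D\tilde\phi(x_k,t_k) \neq 0$ and as the existential $\eta$ from \eqref{ws5} otherwise --- together with the uniform inequality
\[
\tilde\phi_t(x_k, t_k) \leq F\bigl(\gamma_{\eta_k}^{ik}\tilde\phi_{kl}(x_k, t_k)\gamma_{\eta_k}^{lj}\bigr).
\]
Compactness extracts a subsequence with $\eta_k \to \eta_\infty \in \bar B_1(0)$. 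If $D\phi(x_0, t_0) \neq 0$, then necessarily $\eta_\infty = D\phi(x_0,t_0)/|D\phi(x_0,t_0)|$ and the limit recovers \eqref{ws4}; if $D\phi(x_0, t_0) = 0$, then $\eta_\infty$ plays the role of the existential $\eta$ in \eqref{ws5}. Smoothness of $\phi$ and continuity of $F$ on $\bar K$ let one pass to the limit in the displayed inequality.

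The principal obstacle is ensuring that the limiting eigenvalues $\kappa_\infty := \kappa[\gamma_{\eta_\infty}^{ik}\phi_{kl}(x_0, t_0)\gamma_{\eta_\infty}^{lj}]$ lie in $\bar K$ so that the limit inequality is meaningful. Here I would use that each $\kappa_k \in K$ by admissibility of $\tilde\phi$ at $(x_k, t_k)$; continuity of eigenvalues places $\kappa_\infty \in \bar K$; and the vanishing of $F$ on $\partial K$ from \eqref{ws8} provides the continuous extension needed to close the argument even when $\kappa_\infty \in \partial K$. Adopting the jet-based Definition \ref{wsde3} would sidestep the admissibility bookkeeping entirely, since \eqref{ws14}--\eqref{ws15} only impose the inequality when $\kappa \in K$, and the limit jet for $u$ is read off directly from the limit of second-order Taylor expansions of $\tilde\phi$ at $(x_k, t_k)$.
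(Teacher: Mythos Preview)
Your proposal is correct and follows essentially the same route as the paper's proof: perturb by $|x-x_0|^4+(t-t_0)^4$ to force a strict maximum, locate nearby maxima $(x_k,t_k)$ of $u_k-\tilde\phi$, apply the subsolution inequality there with the vector $\eta_k$ equal to $D\tilde\phi/|D\tilde\phi|$ or to the existential $\eta$ from \eqref{ws5}, and pass to the limit along a convergent subsequence of $\eta_k$. The only differences are cosmetic---you perform the quartic perturbation at the outset whereas the paper defers it to the end, and you handle the two gradient cases in one stroke via $\eta_k$ while the paper treats $D\phi(x_0,t_0)\neq 0$ and $D\phi(x_0,t_0)=0$ separately before merging them through the auxiliary sequence $\xi^k$ in \eqref{ws19}. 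Your explicit discussion of why $\kappa_\infty\in\bar K$ and how \eqref{ws8} furnishes the continuous extension of $F$ is more careful than the paper's treatment, which simply asserts the limit inequality; this is a welcome clarification rather than a divergence in method.
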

\begin{proof}
1. Choose $\phi\in C^{\infty}(\R^{n+1})\cap\Lambda(D)$ and suppose $u-\phi$ has a strict local
maximum at some point $(x_0, t_0)\in D\subset\R^n\times (0, \infty).$
As $u_k\goto u$ uniformly near $(x_0, t_0),$ $u_k-\phi$ has a local maximum at a point
$(x_k, t_k) (k=1, 2, \cdots)$ with $(x_k, t_k)\goto (x_0, t_0)$ as $k\goto\infty.$
Since $u_k$ is a weak solution, we have either
\be\label{ws16}
\phi_t\leq F(\gamma^{ik}\phi_{kl}\gamma^{lj})
\ee
at $(x_k, t_k)$ if $D\phi(x_k, t_k)\neq 0,$
or
\be\label{ws17}
\phi_t\leq F(\gamma^{ik}_{\eta^k}\phi_{kl}\gamma^{lj}_{\eta^k})
\ee
at $(x_k, t_k)$ for some $\eta^k\in \R^n$ with $|\eta^k|\leq 1$ if $D\phi(x_k, t_k)=0.$

2. Assume first $D\phi(x_0, t_0)\neq 0,$ then $D\phi(x_k, t_k)\neq 0$ for all large
enough $k.$ Hence we may pass to limits in the inequality \eqref{ws16} and get
\be\label{ws18}
\phi_t\leq F(\gamma^{ik}\phi_{kl}\gamma^{lj})\,\,\mbox{at $(x_0, t_0).$}
\ee

3. Next suppose $D\phi(x_0, t_0)=0.$ We set
\be\label{ws19}
\xi^k\equiv\left\{
\begin{aligned}
(D\phi/|D\phi|)(x_k, t_k)\,\,\mbox{if}\,\, D\phi(x_k, t_k)&\neq 0,\\
\eta^k\,\, \mbox{if}\,\, D\phi(x_k, t_k)&=0.
\end{aligned}
\right.
\ee
Passing if necessary to a subsequence we may assume $\xi^k\goto\eta,$ then
$|\eta|\leq 1.$ Utilizing \eqref{ws16} and \eqref{ws17} we deduce
\be\label{ws20}
\phi_t\leq F(\gamma^{ik}_{\eta}\phi_{kl}\gamma^{lj}_{\eta})\,\,\mbox{at $(x_0, t_0).$}
\ee

4. If $u-\phi$ has only a local maximum at $(x_0, t_0)$ we may apply the above
argument to
\be\label{ws21}
\psi(x, t)\equiv\phi(x, t)+|x-x_0|^4+(t-t_0)^4.
\ee
It's easy to see that when $\phi\in\Lambda(D)$ we have $\psi\in\Lambda(\tilde{D}),$ where $(x_0, t_0)\in \tilde{D}\subset D.$
Hence, we showed $u$ is a weak subsolution  of \eqref{ws1}. Similarly, we can show that $u$ is a weak supersolution. Therefore, $u$ is a weak solution  of \eqref{ws1}.
\end{proof}

\begin{theorem}
\label{thws2}
Assume u is a weak solution of \eqref{ws1} and $\Psi:\R\goto\R$
is a continuous monotone increasing function. Then $v\equiv\Psi(u)$ is a weak solution of \eqref{ws1}.
\end{theorem}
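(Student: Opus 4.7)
The plan is to first handle the case $\Psi\in C^\infty(\R)$ with $\Psi'>0$ everywhere, and then pass to the general case by approximation. For the reduction, mollify and add a linear correction: $\Psi_\e(s):=(\rho_\e*\Psi)(s)+\e s$ is smooth with $\Psi_\e'\ge\e>0$ and $\Psi_\e\to\Psi$ locally uniformly on $\R$; since $u$ is bounded, $v_\e:=\Psi_\e(u)\to v:=\Psi(u)$ boundedly and locally uniformly on $\R^n\times[0,\infty)$, so Theorem~\ref{thws1} transfers the weak solution property of the $v_\e$'s to $v$. It therefore suffices to prove the theorem under the stronger hypothesis $\Psi\in C^\infty$, $\Psi'>0$.

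Assume now that hypothesis and set $\Phi:=\Psi^{-1}$, which is smooth with $\Phi'>0$. Take any $\phi\in C^\infty(\R^{n+1})\cap\Lambda(D)$ such that $v-\phi$ has a local maximum at $(x_0,t_0)\in D$ (the supersolution case is identical). Adding a constant to $\phi$ (which affects neither admissibility nor the inequalities to be proved) I may assume $\phi(x_0,t_0)=v(x_0,t_0)\in\Psi(\R)$. Since $\Psi(\R)$ is open and $\phi$ is continuous, $\psi:=\Phi\circ\phi$ is well-defined and smooth on a neighborhood $\tilde D\ni(x_0,t_0)$. Monotonicity of $\Phi$ forces $u=\Phi(v)\le\Phi(\phi)=\psi$ on $\tilde D$ with equality at $(x_0,t_0)$, so $u-\psi$ has a local maximum at $(x_0,t_0)$.

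The computational heart of the argument is that, writing $a(z):=\Phi'(\phi(z))>0$, one has $D\psi=a\,D\phi$ and $\psi_{ij}=a\,\phi_{ij}+\Phi''(\phi)\phi_i\phi_j$, so $\gamma^{ij}_\psi=\gamma^{ij}_\phi$ wherever $D\phi\neq 0$, and the identity $\gamma^{ik}_\phi\phi_k\equiv 0$ annihilates the $\phi_i\phi_j$ term in the operator, giving
\[
\gamma^{ik}_\psi\psi_{kl}\gamma^{lj}_\psi = a\,\gamma^{ik}_\phi\phi_{kl}\gamma^{lj}_\phi.
\]
At points where $D\phi=0$ the same identity holds with any $\eta\in\R^n$, $|\eta|\le 1$, in place of the gradient direction, because there simply $\psi_{ij}=a\,\phi_{ij}$. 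Since $K$ is a cone and $a>0$, the eigenvalues scale into $K$ and $F$ values scale by $a$, so $\psi\in\Lambda(\tilde D)$ with the same admissibility witness as $\phi$. Applying the weak subsolution property of $u$ to $\psi$ at $(x_0,t_0)$ and using the degree-one homogeneity of $F$ from \eqref{ws11} gives $a\,\phi_t=\psi_t\le F(\gamma^{ik}\psi_{kl}\gamma^{lj})=a\,F(\gamma^{ik}\phi_{kl}\gamma^{lj})$ when $D\phi\neq 0$, and the corresponding $\eta$-version when $D\phi=0$; dividing by $a>0$ recovers \eqref{ws4} or \eqref{ws5} for $\phi$. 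The only mildly delicate points are tracking admissibility when $D\phi$ vanishes and dealing with possibly non-strict $\Psi$; the first is handled by the cancellation $\gamma^{ik}_\phi\phi_k=0$ together with homogeneity of $F$, and the second by the mollification step.
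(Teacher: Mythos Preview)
Your proof is correct and follows essentially the same approach as the paper: reduce to smooth $\Psi$ with $\Psi'>0$ via approximation and Theorem~\ref{thws1}, then pull back the test function through $\Phi=\Psi^{-1}$ and use the homogeneity of $F$ together with the cancellation $\gamma^{ik}_\phi\phi_k=0$. Your single approximation step $\Psi_\e=\rho_\e*\Psi+\e s$ is a minor streamlining of the paper's two-step approximation (first to $\Psi'\ge 0$, then to continuous monotone), and you are slightly more explicit than the paper about why $\psi=\Phi\circ\phi$ is well-defined near $(x_0,t_0)$.
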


\begin{proof}
1. Let $\phi\in C^{\infty}(\R^{n+1})\cap\Lambda(D)$ and $\Psi$ is smooth, $\Psi'>0$ on $\R.$
Suppose $v-\phi$ has a local maximum at $(x_0, t_0)\in D.$ Adding as necessary a constant to $\phi,$
we may assume
\be
\label{ws22}
\left\{
\begin{aligned}
v(x_0, t_0)&=\phi(x_0, t_0),\\
v(x, t)&\leq\phi(x, t)\,\,\mbox{for all $(x, t)$ near $(x_0, t_0)$.}
\end{aligned}
\right.
\ee
Since $\Psi'>0$ in $\R,$ $\Phi\equiv\Psi^{-1}$ is defined and smooth near $u(x_0, t_0)$
with $\Phi'>0.$
Furthermore,
\be\label{ws23}
\left\{
\begin{aligned}
u(x_0, t_0)&=\psi(x_0, t_0)\\
u(x, t)&\leq\psi(x, t)\,\,\mbox{for $(x, t)$ near $(x_0, t_0),$}
\end{aligned}
\right.
\ee
where $\psi\equiv\Phi(\phi).$

2. Since $u$ is a weak solution and
\[\kappa[\gamma^{ik}_{\psi}\psi_{kl}\gamma^{lj}_{\psi}]
=\Phi'\kappa[\gamma^{ik}_{\phi}\phi_{kl}\gamma^{lj}_{\phi}]\subset K\,\,\mbox{when $D\psi(x_0, t_0)\neq 0,$}\]
\[\kappa[\gamma^{ik}_{\eta}\psi_{kl}\gamma^{lj}_{\eta}]
=\Phi'\kappa[\gamma^{ik}_{\eta}\phi_{kl}\gamma^{lj}_{\eta}]\subset K\,\,\mbox{when $D\psi(x_0, t_0)=0$},\]
we have
\be\label{ws24}
\psi_t\leq F(\gamma^{ik}_{\psi}\psi_{kl}\gamma^{lj}_{\psi})\,\,\mbox{at $(x_0, t_0)$ if $D\psi(x_0, t_0)\neq 0$}
\ee
and
\be\label{ws25}
\psi_t\leq F(\gamma^{ik}_{\eta}\psi_{kl}\gamma^{lj}_{\eta})\,\,\mbox{at $(x_0, t_0)$ if $D\psi(x_0, t_0)=0.$}
\ee
Consequently we have at $(x_0, t_0)$
\be\label{ws26}
\phi_t\leq F(\gamma^{ik}_{\phi}\phi_{kl}\gamma^{lj}_{\phi})\,\,\mbox{at $(x_0, t_0)$ if $D\phi(x_0, t_0)\neq 0$}
\ee
and
\be\label{ws27}
\phi_t\leq F(\gamma^{ik}_{\eta}\phi_{kl}\gamma^{lj}_{\eta})\,\,\mbox{at $(x_0, t_0)$ if $D\phi(x_0, t_0)=0.$}
\ee
Similarly we have the opposite inequalities to \eqref{ws26}, \eqref{ws27} when $v-\phi$
have a local minimum at $(x_0, t_0).$

3. We have so far shown that $v=\Psi(u)$ is a weak solution provided $\Psi$ is smooth with
$\Psi'>0.$ Approximating and using Theorem \ref{thws1} we draw the same conclusion if $\Psi'\geq 0$
on $\R.$

4. Finally suppose only that $\Psi$ is continuous and monotone increasing. We construct a sequence of
smooth functions $\{\Psi^k\}_{k=1}^\infty,$ $(\Psi^k)'\geq 0,$ so that $\Psi^k\goto\Psi$
uniformly on $[-\|u\|_{L^\infty}, \|u\|_{L^\infty}].$ Consequently we have
\[v^k=\Psi^k(u)\goto v=\Psi(u)\]
bounded and uniformly. Then Theorem \ref{thws1} asserts $v$ to be a weak solution.
\end{proof}

Following the proof in \cite{ES1}, we can prove the following comparison theorem and contraction property:
\begin{theorem}\label{thws3}
Assume that $u$ is a weak subsolution and $v$ is a weak supersolution of \eqref{ws1}.
Suppose further
\[u\leq v\,\,\mbox{on $\R^n\times\{t=0\}.$}\]
Finally assume
\be\label{ws30}
\mbox{$u$ and $v$ are constant, with $u\leq v$}
\ee
on $\R^n\times[0, \infty)\cap\{|x|+t\geq R\},$
for some constant $R\geq 0.$ Then $u\leq v$ on $\R^n\times[0, \infty).$
\end{theorem}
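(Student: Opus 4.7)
The plan is to argue by contradiction using the doubling-of-variables technique from \cite{ES1}, adapted to the admissible-cone structure of \eqref{ws1}. Suppose $\sigma := \sup_{\R^n \times [0,\infty)} (u-v) > 0$. Hypothesis \eqref{ws30} together with $u \leq v$ at $t = 0$ confines any point at which $u - v$ is close to $\sigma$ to a compact subregion of $\R^n \times [0,\infty)$, so the supremum is attained.

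For small parameters $\e, \alpha, \beta > 0$, introduce the auxiliary function
\be
\Phi(x, y, t, s) = u(x,t) - v(y,s) - \frac{|x-y|^4}{4\e^4} - \frac{(t-s)^2}{2\e^2} - \alpha t - \beta(|x|^2 + |y|^2),
\ee
which, thanks to \eqref{ws30} and the coercive $\beta$-terms, attains its maximum at some point $(\hat x, \hat y, \hat t, \hat s)$. Choosing $\alpha, \beta$ small forces $\Phi(\hat x, \hat y, \hat t, \hat s) \geq \sigma/2 > 0$, so $\hat t, \hat s > 0$; standard penalization estimates give $|\hat x - \hat y|, |\hat t - \hat s| \goto 0$ as $\e \goto 0$. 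The quartic spatial penalty (in place of the more customary quadratic) is chosen so that its gradient and Hessian vanish to high order at $x = y$, a standard device in level-set theory to tame the singularity of $F$ at $Du = 0$.

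Freezing $(y,s) = (\hat y, \hat s)$ produces a smooth test function $\phi(x,t)$ for which $u - \phi$ has a local maximum at $(\hat x, \hat t)$; symmetrically for $v$. The penalty-induced gradients agree up to $O(\beta)$ (call the common one $p$), the time derivatives differ by exactly $\alpha$, and the Jensen--Ishii matrix inequality constrains the Hessians $X$ coming from $u$ and $Y$ coming from $v$. When $\hat x \neq \hat y$, so $p \neq 0$, subtracting the subsolution and supersolution inequalities \eqref{ws14} yields
\be
\alpha \leq F(\gamma_p^{ik} X_{kl} \gamma_p^{lj}) - F(\gamma_p^{ik} Y_{kl} \gamma_p^{lj}),
\ee
and the monotonicity \eqref{ws6} and concavity \eqref{ws7} of $F$ on $K$ combine with the matrix inequality to make the right-hand side $O(\beta)$, contradicting $\alpha > 0$ once $\beta$ is small. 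When $\hat x = \hat y$ so $p = O(\beta)$, the analogous reasoning with \eqref{ws15} applied with a common $\eta$ for $u$ and $v$ yields the contradiction.

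The main obstacle, absent from \cite{ES1}, is that \eqref{ws14}--\eqref{ws15} only carry information when the $\gamma$-conjugated test Hessian has eigenvalues in the admissible cone $K$. I would navigate this by first running the doubling argument with $F$ replaced by a globally elliptic approximant such as the $\hf^n$ built later in the paper, for which the Evans--Spruck calculation applies verbatim, and then passing to the limit via Theorem \ref{thws1} to recover comparison for the original $F$. Once admissibility has been so arranged, the remaining calculation is formally identical to that of \cite{ES1}.
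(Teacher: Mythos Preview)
The paper does not supply a proof of Theorem~\ref{thws3}; it simply asserts that the argument follows \cite{ES1}. Your doubling-of-variables outline with the quartic spatial penalty is exactly the Evans--Spruck template, so your overall strategy agrees with what the paper has in mind, and you are right to single out the admissible-cone restriction in Definitions~\ref{wsde2}--\ref{wsde3} as the only new wrinkle.

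Your proposed resolution of that wrinkle, however, does not close the gap. To run the doubling argument for the $\hF^n$-equation you would need $u$ and $v$ to be viscosity sub- and supersolutions of \emph{that} equation, and this does not follow from Definitions~\ref{wsde2}--\ref{wsde3}: those definitions give no information whatsoever for test functions whose $\gamma$-conjugated Hessian falls outside $K$, and on $K$ one has $\hf^n\ge f$, so even where both sides are defined the supersolution inequality points the wrong way. Theorem~\ref{thws1} is likewise of no help---it transfers the \emph{solution} property along uniformly convergent sequences, but says nothing about comparing a fixed weak subsolution to a fixed weak supersolution of the original equation. A route closer to what the paper actually carries out (see the elliptic comparison in Theorem~\ref{thci1}, which \emph{is} written in full using sup/inf-convolutions) is to argue directly with the Jensen--Ishii matrices $X\le Y$: once the subsolution test is admissible, the cone monotonicity $K+\bar K_n^+\subset K$ forces the supersolution test to be admissible as well, and the subtraction then proceeds as in \cite{ES1}.
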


\begin{theorem}\label{thws4}
Assume that $u$ and $v$ are weak solution of \eqref{ws1} such that
\be\label{ws28}
\mbox{$u$ and $v$ are constant on $\R^n\times[0, \infty)\cap\{|x|+t\geq R\}$}
\ee
for some constant $R>0.$ Then
\be\label{ws29}
\max\limits_{0\leq t<\infty}\|u(\cdot, t)-v(\cdot, t)\|_{L^\infty(\R^n)}
=\|u(\cdot, 0)-v(\cdot, 0)\|_{L^\infty(\R^n)}.
\ee
\end{theorem}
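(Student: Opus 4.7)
The plan is to deduce Theorem \ref{thws4} from the comparison theorem (Theorem \ref{thws3}) via a standard constant-shift argument.

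First I would observe that the notion of weak solution is invariant under adding a constant: if $u$ is a weak solution of \eqref{ws1} and $c \in \R$, then $u + c$ is also a weak solution. This is immediate from Definition \ref{wsde2} (or \ref{wsde3}) since the equation depends only on the derivatives of the test function $\phi$ and on the spatial Hessian/gradient of the putative solution, so the admissibility of test functions and the resulting inequalities \eqref{ws4}--\eqref{ws5} are unchanged when $u$ is replaced by $u + c$. (Alternatively, this is a special case of Theorem \ref{thws2} applied to $\Psi(s) = s + c$.)

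Next, set $c := \|u(\cdot,0)-v(\cdot,0)\|_{L^\infty(\R^n)}$, so that at $t=0$ we have $u(\cdot,0) - c \leq v(\cdot,0)$ pointwise on $\R^n$. The function $\tilde u := u - c$ is a weak solution by the previous step, hence in particular a weak subsolution, while $v$ is a weak supersolution. To apply Theorem \ref{thws3}, I must verify condition \eqref{ws30}: on $\R^n\times[0,\infty)\cap\{|x|+t\geq R\}$ both $u$ and $v$ are equal to some constants $a$ and $b$ respectively, so $\tilde u \equiv a - c$ and $v \equiv b$ there. At any point $x \in \R^n$ with $|x| \geq R$ we have $|x|+0 \geq R$, so $u(x,0) = a$ and $v(x,0) = b$, and the initial inequality gives $a - c \leq b$. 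Thus $\tilde u \leq v$ on the far region, and the hypothesis of Theorem \ref{thws3} is satisfied.

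Applying Theorem \ref{thws3} therefore yields $u - c \leq v$ on all of $\R^n\times[0,\infty)$, i.e.\ $u(\cdot,t) - v(\cdot,t) \leq c$ for every $t\geq 0$. A symmetric argument with $v - c$ and $u$ gives $v(\cdot,t) - u(\cdot,t) \leq c$, so
\[
\|u(\cdot,t)-v(\cdot,t)\|_{L^\infty(\R^n)} \leq c = \|u(\cdot,0)-v(\cdot,0)\|_{L^\infty(\R^n)}
\]
for every $t\geq 0$. The reverse inequality is trivial (take $t=0$), yielding \eqref{ws29}.

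The only nontrivial step is the translation invariance $u \mapsto u + c$, but this is transparent from the definition, so I do not anticipate any real obstacle; the main content of the theorem is packed into Theorem \ref{thws3}, and the contraction property is essentially a formal corollary once one verifies that the far-field condition \eqref{ws30} persists after the constant shift.
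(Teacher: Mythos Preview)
Your argument is correct and is precisely the standard deduction of the contraction property from the comparison principle. The paper does not supply its own proof of Theorem~\ref{thws4} but defers to \cite{ES1}, where exactly this constant-shift-plus-comparison argument is used; so your proposal coincides with the intended approach.
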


\bigskip
\section{Existence of weak solutions}
\label{secew}
\setcounter{equation}{0}

\subsection{Solution of the approximate equations.}\label{ews1}
In this subsection, we are going to study general curvature flow in weak sense. Moreover, we are not going to restrict ourselves in the admissible cone $K$.
Let's define
\be\label{ew1}
\hf^n(\tau)=\inf\limits_{\lambda}\{f(\lambda)+Df(\lambda)\cdot(\tau-\lambda): \lambda\in K_n\,\, \mbox{and} \,\,\lambda_{\max}<n\},
\ee
where $K_n\subset K$ and $f\mid_{\partial K_n}=1/n.$ Then we have $\hf^n(\tau)=f(\tau)$ when $\tau\in K_n$ and $\tau_{\max}<n.$ Furthermore, $\hf^n$ is concave, Lipschitz and satisfying
$\hf^n_i>0$ in $\R^n.$

Now, let's consider
\be\label{ew2}
\begin{aligned}
&u^{\e, n}_t=\hF^n(\gamma^{\e ik}u^{\e, n}_{kl}\gamma^{\e lj} ),\,\,\mbox{in $\R^n\times[0, \infty)$}\\
&u^{\e, n}=g \,\,\mbox{on $\R^n\times\{t=0\}$},\\
\end{aligned}
\ee
where $\gamma^{\e ik}=\delta_{ik}-\frac{u_iu_k}{\e \sqrt{\e^2+|Du|^2}+\e^2+|Du|^2},$ and $\gamma^{\e ik}\gamma^{\e kj}\mid_{u}=\lt(\delta_{ij}-\frac{u_iu_j}{\e^2+|Du|^2}\rt).$ We also assume $\|g\|_{C^2(\R^n)}$ is bounded.
\begin{theorem}\label{thew1}
For each $0<\e<1$ there exists a unique bounded solution $\uen\in C^{2, 1}(\R^n\times[0, \infty))$ of \eqref{ew2}. In addition,
\be\label{ew3}
\sup\limits_{0<\e<1}\|\uen, D\uen, \uen_t\|_{L^{\infty}(\R^n\times[0, \infty))}\leq C\|g\|_{C^2(\R^n)},
\ee
and
\be\label{ew3'}
\|D^2\uen\|_{L^\infty(\R^n\times [0, \infty))}\leq C(\|g\|_{C^2(\R^n)}, \e, n)
\ee
\end{theorem}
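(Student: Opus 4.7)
The plan is to exhibit \eqref{ew2} as a uniformly parabolic, concave, fully nonlinear equation with globally smooth nonlinearity, and then invoke standard theory (Krylov--Safonov, Evans--Krylov, parabolic Schauder) to produce smooth bounded solutions together with the stated a priori estimates.

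First I would verify that $\hf^n$, being the infimum of affine functions with slopes $Df(\lambda)$ for $\lambda$ in the bounded region $K_n \cap \{\lambda_{\max}<n\}$, is concave and globally Lipschitz on $\R^n$, with $0 < c(n) \leq \hf^n_i \leq C(n)$ everywhere, using that $f_i$ is continuous and strictly positive on the closure of that region. By Euler's identity and the degree-one homogeneity of $f$, each affine piece $f(\lambda) + Df(\lambda)\cdot(\tau-\lambda)$ vanishes at $\tau=0$, hence $\hf^n(0)=0$ and constants solve \eqref{ew2}. A direct computation gives $\gamma^{\e}\gamma^{\e} = I - (Du\otimes Du)/(\e^2+|Du|^2)$, with eigenvalues in $[\e^2/(\e^2+|Du|^2),1]$. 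Combined with the bounds on $\hf^n_i$, the linearization of the right-hand side of \eqref{ew2} in $D^2u$ is uniformly elliptic with constants depending on $\e,n$ and $\|Du\|_{L^\infty}$; moreover, since $\hF^n$ is concave and $R\mapsto \gamma^\e R \gamma^\e$ is linear in $R$, the nonlinearity is concave in $D^2u$.

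Existence and uniqueness then proceed by solving the Cauchy problem on cylinders $B_R \times [0,T]$ with $u=g$ on the parabolic boundary, invoking Krylov's theorem for concave uniformly parabolic equations, and passing to the limit $R\to\infty$ using the estimates below. Comparison with constant solutions gives $|\uen| \leq \|g\|_{L^\infty}$. Differentiating \eqref{ew2} in $x_k$, the function $w = \uen_{x_k}$ satisfies a homogeneous linear parabolic equation $w_t = a^{ij} w_{ij} + b^l w_l$ (the first-order term arises from differentiating $\gamma^\e$ in $u_l$), so the maximum principle yields $\|D\uen\|_{L^\infty} \leq \|Dg\|_{L^\infty}$. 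Likewise $v = \uen_t$ satisfies a homogeneous linear parabolic equation with $\|v(\cdot,0)\|_{L^\infty} \leq C\|g\|_{C^2}$ by Lipschitz continuity of $\hF^n$ and $\hF^n(0)=0$, giving the bound on $\uen_t$.

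For the final bound \eqref{ew3'}, with $\|D\uen\|_{L^\infty}$ already controlled the equation is uniformly parabolic with ellipticity constants depending only on $\e, n, \|g\|_{C^2}$, and an Evans--Krylov type interior $C^{2,\alpha}$ estimate for concave uniformly parabolic equations together with the $C^2$ regularity of the initial data yields the $L^\infty$ bound on $D^2\uen$ with constant depending on $\|g\|_{C^2},\e,n$. The main obstacle I foresee is showing that $\hf^n_i$ is bounded \emph{below} by a positive constant on all of $\R^n$: this depends crucially on taking the infimum over the bounded region $\{\lambda\in K_n : \lambda_{\max}<n\}$, because on all of $K$ the derivative $f_i$ may degenerate near $\partial K$ or at infinity; once that uniform lower bound (together with the corresponding upper bound) is secured, the rest is classical fully nonlinear parabolic theory.
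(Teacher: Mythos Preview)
Your a priori estimates match the paper's: both bound $\uen$, $D\uen$, $\uen_t$ by the maximum principle applied to the equation and to its spatial and temporal derivatives, and both then invoke concave fully nonlinear regularity theory for the $D^2\uen$ bound once uniform parabolicity is in hand. Your identification of $\hF^n(0)=0$ via Euler's identity and the two-sided bound $0<c(n)\leq\hf^n_i\leq C(n)$ are exactly the structural facts the argument needs.

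Where you part ways with the paper is the existence step. The paper adds a further regularization $\sigma\,\us_{ii}$ (equation~\eqref{ew4}); this makes the linearized operator uniformly parabolic with ellipticity constant at least $\sigma$ \emph{regardless of the gradient}, so existence of $\us$ on $\R^n$ is classical. The differentiation estimates are then automatically uniform in $\sigma$, and one sends $\sigma\to 0$. You instead propose to solve \eqref{ew2} directly on cylinders $B_R\times[0,T]$ via Krylov and let $R\to\infty$. This has a circularity: the ellipticity constants of \eqref{ew2} involve $\e^2/(\e^2+|Du|^2)$, so invoking Krylov's existence theorem already presupposes a gradient bound, while your gradient bound is obtained only \emph{after} a solution exists. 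Moreover, on $B_R$ with lateral data $g$, differentiating in a fixed Cartesian direction $x_k$ does not control $u_{x_k}$ on $\partial B_R\times[0,T]$, so the maximum-principle step for $Du$ does not go through on the bounded domain. The paper's $\sigma$-trick (inherited from Evans--Spruck~\cite{ES1}) is precisely what breaks this loop. Your scheme can be salvaged---for instance by a method-of-continuity deformation from the heat equation, using your a priori estimates to close the open set---but as written the existence portion has a gap that the paper's extra layer of approximation avoids.
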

\begin{proof}
1. For each $0<\sigma<1,$ consider the PDE
\be\label{ew4}
\begin{aligned}
&\us_t=\hF^n(\gamma^{\e ik}\us_{kl}\gamma^{\e lj})+\sigma \us_{ii},\,\mbox{in $\R^n\times [0, \infty)$}\\
&\us=g\,\,\mbox{on $\R^n\times\{t=0\},$}\\
\end{aligned}
\ee
Now let's denote
\[\hF^n(\gamma^{\e ik}\us_{kl}\gamma^{\e lj})=G(D^2\us, D\us),\]
then the linearized operator is
\be\label{ew5}
\mathfrak{L}=\frac{\partial}{\partial t}-\Gs^{ij}\partial_{ij}-\Gs^i\partial_i-\sigma\delta_{ij}\partial_{ij}.
\ee
It is easy to see that $\mathfrak{L}$ is uniformly parabolic. Thus, we obtain that
there exists a unique solution $\us$ satisfies
\be\label{ew6}
\|\us\|_{L^{\infty}(\R^n\times[0, \infty)}=\|g\|_{L^\infty(\R^n)}.
\ee

2. Now differentiating equation \eqref{ew4} with respect to $x_l$ we get
\be\label{ew7}
\mathfrak{L}\us_l=0,
\ee
which implies
\be\label{ew8}
\|D\us\|_{L^{\infty}(\R^n\times[0, \infty))}=\|Dg\|_{L^{\infty}(\R^n)}.
\ee
Similarly, we  have
\be\label{ew9}
\|\us_t\|_{L^{\infty}(\R^n\times[0, \infty))}\leq C\|D^2 g\|_{L^{\infty}(\R^n)}.
\ee

3. Since
\be\label{ew 10}
G^{ij}_{\e, n, \sigma}\xi_i\xi_j\geq C(n)\lt(1-\frac{L^2}{L^2+\e^2}\rt)|\xi|^2,
\ee
provided $\|Dg\|_{L^{\infty}(\R^{n})}\leq L.$
We deduce from \eqref{ew9} that we have bounds on the second derivatives of $\{\us\}$ which are uniform in $\sigma.$
In particular,
\be\label{ew11}
\|D^2\us\|_{L^{\infty}(\R^n\times[0, \infty))}\leq C(\e, n, \|g\|_{C^2(\R^n)}).
\ee

4. By Schauder estimates we conclude $\|\{\us\}_{0<\sigma<1}\|_{C^{2+1,1+1/2}(\R^n\times[0, \infty))}\leq C,$
where $C$ is independent of $\sigma.$ Thus, we have for each multi-index $\alpha,$ $|\alpha|\leq 2,$
\[D^\alpha \us\goto D^\alpha\uen\,\,\mbox{locally uniformly as $\sigma\goto 0,$}\]
for a $C^{2,1}$ function $\uen$ solving \eqref{ew2}.
\end{proof}

\subsection{Passing to the limit}\label{ews2}
\begin{theorem}\label{thew2}
Assume $g:\R^n\goto\R$ is continuous and satisfying that
\be\label{ew41'}
\mbox{$g$ is constant on $\R^n\cap\{|x|\geq S\}.$}
\ee
Then, there exists a weak solution $u$ of \eqref{ws1}, such that
\be\label{ew13'}
\mbox{$u$ is constant on $\R^n\times[0, \infty)\cap\{|x|+t\geq R\}.$}
\ee
\end{theorem}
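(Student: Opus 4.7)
The plan is to construct $u$ as an iterated limit of the smooth approximations furnished by Theorem \ref{thew1} and to invoke the stability result of Theorem \ref{thws1} at each stage. I first mollify $g$ to obtain $g^k \in C^2(\R^n)$ with $g^k \to g$ uniformly and each $g^k$ still constant on $\{|x|\ge S+1\}$; for each triple $(\e, n, k)$, Theorem \ref{thew1} produces $u^{\e, n, k}\in C^{2,1}$ solving \eqref{ew2} with initial datum $g^k$ and satisfying the uniform bounds $\|u^{\e, n, k}, Du^{\e, n, k}, u^{\e, n, k}_t\|_{L^\infty}\le C\|g^k\|_{C^2}$.

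Second, I need spatial confinement: $u^{\e, n, k}\equiv c$ on $\{|x|+t\ge R\}$ for some $R$ independent of $\e, n, k$. For this I construct shrinking-sphere barriers of the form $w^\pm(x,t)=c\pm\rho(|x|^2-r_0^2+Mt)$, where $\rho$ is a smooth nondecreasing cutoff supported in $(-\infty,0]$ and $M$ is chosen large in terms of the Lipschitz bound of $\hf^n$ on the range of eigenvalues appearing. Since $f$ is homogeneous of degree one, the Euler relation $Df(\lambda)\cdot\lambda=f(\lambda)$ forces $\hf^n(0)=0$, so constants are classical solutions of \eqref{ew2}; checking the super-/subsolution property of $w^\pm$ and invoking the comparison principle for the regularized equation then yields the confinement.

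Third, I pass to the limit $\e\to 0$, $n\to\infty$ along a diagonal subsequence with $k$ fixed. Arzel\`a--Ascoli and the uniform Lipschitz bounds produce a locally uniform limit $u^k\in C(\R^n\times[0,\infty))$. To verify $u^k$ is a weak solution, fix $\phi\in C^\infty(\R^{n+1})\cap\Lambda(D)$ with $u^k-\phi$ attaining a strict local maximum at $(x_0,t_0)$; standard perturbation yields nearby maxima $(x_{\e,n},t_{\e,n})$ of $u^{\e, n, k}-\phi$, where \eqref{ew2} gives $\phi_t\le\hF^n(\gamma^{\e ik}\phi_{kl}\gamma^{\e lj})$. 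When $D\phi(x_0,t_0)\ne 0$, the tensors $\gamma^\e$ converge to $\gamma$ smoothly and the admissibility of $\phi$ keeps $\kappa[\gamma^{ik}\phi_{kl}\gamma^{lj}]$ in the interior of $K$; for large $n$ and small $\e$ the perturbed eigenvalues enter $K_n\cap\{\lambda_{\max}<n\}$, where $\hf^n=f$, so \eqref{ws4} follows in the limit. The degenerate case $D\phi(x_0,t_0)=0$ is handled by extracting a limit of direction vectors as in the proof of Theorem \ref{thws1}, appealing to \eqref{ws3} to keep the limiting eigenvalues in $K$.

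Finally, the confinement $R$ from the second step is independent of $k$, so Theorem \ref{thws4} applies and gives $\|u^k-u^l\|_{L^\infty}\le\|g^k-g^l\|_{L^\infty}\to 0$; hence $u^k\to u$ uniformly and Theorem \ref{thws1} identifies $u$ as a weak solution of \eqref{ws1} with the stated constancy outside $\{|x|+t\ge R\}$. I expect the main obstacle to be the third step: one must reconcile the two distinct regularizations --- $\e$ in the denominator of $\gamma^\e$ and $n$ in the concave envelope $\hf^n$ --- to recover the original operator $F$ exactly where the test function is admissible, and the degenerate case $D\phi=0$ requires a careful diagonal extraction of direction vectors compatible with \eqref{ws3}.
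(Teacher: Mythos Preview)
Your overall scheme---mollify $g$, solve the regularized problem via Theorem~\ref{thew1}, pass to a diagonal limit, then invoke Theorems~\ref{thws1} and~\ref{thws4}---is exactly the paper's, and your verification that the limit is a weak solution (your third step, including the extraction of a limiting direction $\eta$ when $D\phi(x_0,t_0)=0$) reproduces the paper's argument.

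The gap is in your confinement step. You assert $u^{\e,n,k}\equiv c$ on $\{|x|+t\ge R\}$ with $R$ independent of $(\e,n,k)$ via barriers $w^\pm=c\pm\rho(|x|^2-r_0^2+Mt)$, choosing $M$ ``large in terms of the Lipschitz bound of $\hf^n$''. But that Lipschitz bound is \emph{not} uniform in $n$: by construction the (sub)gradient of $\hf^n$ at any point is some $Df(\lambda)$ with $\lambda\in K_n$, $\lambda_{\max}<n$, and as $n\to\infty$ these $\lambda$ exhaust $K$, near whose boundary $|Df|$ typically blows up (already for $f=\sigma_k^{1/k}$). So your $M$, hence $R$, depends on $n$, and the claimed uniformity fails. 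The paper never invokes a Lipschitz bound; it takes the explicit barrier $v(x,t)=\varphi(|x|^2/2+c_0t)$ with $\varphi(s)=(s-2)^3$ on $[0,2]$ and exploits concavity together with the Euler relation $D\hf^n(A)\cdot A=\hf^n(A)$ to obtain $\hF^{n,ij}_v\bigl(\delta_{ij}-x_ix_j/|x|^2\bigr)\ge f(0,1,\dots,1)$, which fixes $c_0=f(0,1,\dots,1)$ independently of $n$. The residual error is $C(n)\e^{1/2}$, handled by choosing the diagonal subsequence so that $C(n_k)\e_k^{1/2}\to 0$. Note also that the paper does not (and need not) show the approximants $u^{\e,n}$ themselves are constant on a fixed region; only the limit $u$ is, being sandwiched between $v$ and $-v$ after passage to the limit.
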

\begin{proof}
1. Suppose temporarily $g$ is smooth. We can extract a subsequence
$\{u^{\e_k, n_k}\}_{k=1}^{\infty}\subset\{\uen\}_{0<\e\leq 1}$ so that,
$u^{\e_k, n_k}\goto u$ locally uniformly in $\R^n\times[0, \infty)$
for some bounded, Lipschitz function $u$.

2. We assert now that $u$ is a weak solution of
\be\label{ew14}
\begin{aligned}
&u_t=F(\gamma^{ik}u_{kl}\gamma^{lj} ),\,\,\mbox{in $\R^n\times[0, \infty)$}\\
&u=g \,\,\mbox{on $\R^n\times\{t=0\}$}.\\
\end{aligned}
\ee
For this, let $\phi\in C^{\infty}(\R^{n+1})\cap \Lambda(D)$ and suppose $u-\phi$ has a strictly local maximum
at a point $(x_0, t_0)\in\R^n\times(0, \infty)\cap D.$
As $\uk\goto u$ uniformly near $(x_0, t_0),$ $\uk-\phi$ has a local maximum at a point $(x_k, t_k)$
with $(x_k, t_k)\goto(x_0, t_0)\in D,$ as $k\goto\infty.$

Since $\uk$ are $C^{2, 1}$, we have
\[D\uk=D\phi,\,\,\uk_t=\phi_t,\,\, D^2\uk\leq D^2\phi\,\,\mbox{at $(x_k, t_k)$.}\]
Thus \eqref{ew2} implies that
\be\label{ew15}
\phi_t-\hF^{n_k}(\gamma^{\e_k ik}\phi_{kl}\gamma^{\e_k lj} )\leq 0,\,\,\mbox{at $(x_k, t_k).$}
\ee

Now suppose first that $|D\phi|\neq 0$ at $(x_0, t_0).$
Since $f(\kappa[\gamma^{ik}\phi_{kl}\gamma^{lj}])>0$ at $(x_0, t_0),$
we have for $k$ very large
\be\label{ew16}
\hF^{n_k}(\gamma^{\e_k ik}\phi_{kl}\gamma^{\e_k lj})=F(\gamma^{\e_k ik}\phi_{kl}\gamma^{\e_k lj})
\,\,\mbox{at $(x_k, t_k).$}
\ee
Let $k\goto \infty$ we get
\be\label{ew17}
\phi_t-F(\gamma^{ik}\phi_{kl}\gamma^{lj})\leq 0\,\,\mbox{at $(x_0, t_0)$.}
\ee

Next, assume instead $D\phi(x_0, t_0)=0.$ Set
\be\label{ew20}
\eta^k=\frac{D\phi}{(w^{\e_k}_\phi(\e_k+w^{\e_k}_\phi))^{1/2}},
\ee
where $w^{\e_k}_\phi=\sqrt{\e_k^2+|D\phi|^2}.$
Then \eqref{ew15} becomes
\be\label{ew21}
\phi_t\leq \hF^{n_k}\lt(\gamma^{ik}_{\eta^k}\phi_{kl}\gamma^{lj}_{\eta^k}\rt).
\ee
We may assume, upon passing to a subsequence and reindexing if necessary, that $\eta^k\goto\eta$
in $\R^n$ for some $|\eta|\leq 1.$
Following the above argument we get
\be\label{ew22}
\phi_t\leq F\lt(\gamma^{ik}_\eta\phi_{kl}\gamma^{lj}_\eta\rt)\,\,\mbox{at $(x_0, t_0).$}
\ee

If $u-\phi$ has a local maximum, but not necessarily a strict local maximum at $(x_0, t_0),$
we repeat the argument above by replacing $\phi(x, t)$ with
\be\label{ew23}
\tilde{\phi}(x, t)=\phi(x, t)+|x-x_0|^4+(t-t_0)^4.
\ee
Consequently, u is a weak subsolution. u is a weak supersolution follows analogously.

3. We want to verify that there exists a weak solution $u$ such that \eqref{ew13'} holds.
Upon rescaling as necessary, we may assume
\be\label{ew24}
|g|\leq 1\,\, \mbox{on $\R^n$,} g=0 \,\,\mbox{on $\R^n\cap\{|x|\geq1\}$.}
\ee
Consider now the auxiliary function
\be\label{ew25}
v(x, t)\equiv \varphi(|x|^2/2+c_0t),
\ee
where $c_0>0$ and
\be\label{ew26}
\varphi(s)\equiv\left\{
\begin{aligned}
&0\,\,(s\geq 2)\\
&(s-2)^3\,\,(0\leq s\leq 2).\\
\end{aligned}
\right.
\ee
Then
\be\label{ew27}
\varphi'(s)\equiv\left\{
\begin{aligned}
&0\,\,(s\geq 2)\\
&3(s-2)^2\,\,(0\leq s\leq 2),\\
\end{aligned}
\right.
\ee
and
\be\label{ew28}
\varphi''(s)\equiv\left\{
\begin{aligned}
&0\,\,(s\geq 2)\\
&6(s-2)\,\,(0\leq s\leq 2).\\
\end{aligned}
\right.
\ee
In particular, $|\varphi''(x)|\leq C(\varphi'(s))^{1/2} \,\,(s\geq 0).$
We have
\be\label{ew29}
\begin{aligned}
&v_t-\hF^n\lt(\gamma_v^{\e ik}v_{kl}\gamma_v^{\e lj}\rt)\\
&=c_0\varphi'-\hF^{n, ij}_v\lt(\delta_{ik}-\frac{v_iv_k}{w^\e_v(\e+w^\e_v)}\rt)v_{kl}\lt(\delta_{lj}-\frac{v_lv_j}{w^\e_v(\e+w^\e_v)}\rt)\\
&=c_0\varphi'-\hF^{n, ij}_v\lt(v_{il}-\frac{v_iv_kv_{kl}}{w^\e_v(\e+w^\e_v)}\rt)\lt(\delta_{lj}-\frac{v_lv_j}{w^\e_v(\e+w^\e_v)}\rt)\\
&=c_0\vp'-\hF^{n, ij}_v\lt(v_{ij}-\frac{v_iv_kv_{kj}}{w^\e_v(\e+w^\e_v)}-\frac{v_lv_jv_{il}}{w^\e_v(\e+w^\e_v)}
+\frac{v_iv_kv_lv_jv_{kl}}{(w^\e_v)^2(\e+w^\e_v)^2}\rt)\\
&=c_0\vp'-\hF^{n, ij}_v\lt(\vp'\delta_{ij}+\vp''x_ix_j\rt)+2\hF^{n, ij}_v\frac{v_iv_kv_{kj}}{w^\e_v(\e+w^\e_v)}
-\hF^{n, ij}_v\frac{v_iv_jv_kv_lv_{kl}}{(w^\e_v)^2(\e+w^\e_v)^2}\\
&=c_0\vp'-\vp'\sum\hF^{n, ii}_v-\vp''\hF^{n, ij}_vx_ix_j+2\hF^{n, ij}_v\frac{(\vp')^2x_ix_k(\vp'\delta_{kj}+\vp''x_kx_j)}{w^\e_v(\e+w^\e_v)}\\
&-\hF^{n, ij}_v\frac{(\vp')^4x_ix_jx_kx_l(\vp'\delta_{kl}+\vp''x_kx_l)}{(w^\e_v)^2(\e+w^\e_v)^2}\\
&=\vp'\lt[c_0-\hF^{n, ij}_v\lt(\delta_{ij}-2\frac{(\vp')^2x_ix_j}{w^\e_v(\e+w^\e_v)}
+\frac{(\vp')^4x_ix_j|x|^2}{(w^\e_v)^2(\e+w^\e_v)^2}\rt)\rt]\\
&-\vp''\hF^{n, ij}_v\lt[x_ix_j-2\frac{(\vp')^2x_ix_j|x|^2}{w^\e_v(\e+w^\e_v)}
+\frac{(\vp')^4x_ix_j|x|^4}{(w^\e_v)^2(\e+w^\e_v)^2}\rt]\equiv A+B,\\
\end{aligned}
\ee
where $w^\e_v=\sqrt{\e^2+|Dv|^2}=\sqrt{\e^2+|\vp'|^2|x|^2}.$
\be\label{ew30}
\begin{aligned}
A&=\vp'\lt[c_0-\hF^{n, ij}_v\lt(\delta_{ij}-\frac{x_ix_j}{|x|^2}\rt)
-\hF^{n, ij}_vx_ix_j\lt(\frac{1}{|x|^2}-2\frac{(\vp')^2}{w^\e_v(\e+w^\e_v)}
+\frac{(\vp')^4|x|^2}{(w^\e_v)^2(\e+w^\e_v)^2}\rt)\rt]\\
&\leq\vp'\lt[c_0-\hF^{n, ij}_v\lt(\delta_{ij}-\frac{x_ix_j}{|x|^2}\rt)\rt]\\
&\leq\vp'\lt[c_0-f\lt(\kappa[\delta_{ij}-\frac{x_ix_j}{|x|^2}]\rt)\rt],\\
\end{aligned}
\ee
the last inequality comes from the concavity assumption on $f$. We can see that when $c_0\leq f(0, 1, \cdots, 1),$ we get $A\leq 0.$
\be\label{ew31}
\begin{aligned}
|B|&=|\vp''|\hF^{n, ij}_vx_ix_j\lt[1-\frac{(\vp')^2|x|^2}{w^\e_v(\e+w^\e_v)}\rt]^2\\
&\leq C(n)|\vp''|\frac{\e^2}{(w^\e_v)^2}|x|^2.
\end{aligned}
\ee
Now if $|\vp'|\leq\e$ then $|B|\leq C(n)|\vp''|\leq C(n)\e^{1/2};$
if $|\vp'|\geq\e$ then $|B|\leq C(n)\frac{|\vp''|}{|\vp'|^2}\e^2\leq C(n)\frac{\e^2}{|\vp'|^{3/2}}\leq C(n)\e^{1/2}.$
Therefore, we get
\[|B|\leq C(n)\e^{1/2}.\]
Combining \eqref{ew29}-\eqref{ew31} yields
\be\label{ew32}
v_t-\hF^n\lt(\gamma^{\e ik}v_{kl}\gamma^{\e lj}\rt)\leq C(n)\e^{1/2}.
\ee
Therefore, let $\omega^{\e,n}\equiv v(x, t)-C(n)t\e^{1/2},$ then
$\omega^{\e, n}$ satisfies
\be\label{ew33}
\omega_t^{\e, n}-\hF^n\lt(\gamma^{\e ik}\omega^{\e, n}_{kl}\gamma^{\e lj}\rt)\leq 0.
\ee
Moreover,
\be\label{ew34}
\omega^{\e, n}(x, 0)=\vp(|x|^2/2)=0,\,\,\mbox{if $|x|\geq2$}
\ee
and
\be\label{ew35}
\omega^{\e, n}(x, 0)=\vp(|x|^2/2)\leq -1,\,\,\mbox{if $|x|\leq 1.$}
\ee

We see $\omega^{\e, n}(x, 0)\leq g$ on $\R^n\times\{t=0\},$ applying maximum principle we
deduce $\omega^{\e, n}\leq \uen$ in $\R^n\times[0, \infty)$ for any $0<\e<1.$ Let $(\e, n)=(\e_k, n_k),$
where $(\e_k, n_k)$ is a subsequence such that $\lim\limits_{k\goto\infty}C(n_k)\e_k^{1/2}=0$ and $u^{\e_k, n_k}\goto u.$
Then, sending $k\goto\infty$ we get
\be\label{ew36}
v(x, t)\leq u(x, t)
\ee
for all $x\in\R^n,\,\,t\geq 0.$ Thus, $u\geq 0$ if $|x|^2/2+c_0t\geq 2.$
Similarly, we let $\tilde{\omega}^{\e, n}=-\omega^{\e, n},$ and deduce
 $u\leq 0$ if $|x|^2/2+c_0t\geq 2.$ Assertions \eqref{ew13'} is proved.

4. Suppose $g$ satisfies \eqref{ew41'} but is only continuous. We select smooth
$\{g^k\}_{k=1}^\infty,$ satisfying \eqref{ew41'} for the same $S$, so that $g^k\goto g$
uniformly on $\R^n.$ Denote by $u^k$ the solution of $\eqref{ws1}$ constructed above with initial function $g^k.$
By Theorem \ref{thws4} we see $\lim\limits_{k\goto\infty}u^k=u$ exists uniformly on $\R^n\times[0, \infty).$
According to Theorem \ref{thws1}, u is a weak solution of \eqref{ws1}.
\end{proof}

\bigskip
\section{Consistency with classical motion by general curvature}
\label{seccw}
\setcounter{equation}{0}

In this section, we will check that our generalized evolution by general curvature agrees with the classical motion when the initial function $g(x)$ is properly chosen. Let us suppose for this section that $\Gamma_0$ is a smooth hypersurface, the connected boundary of a bounded open set $U\subset\R^n,$ and
$\kappa(A(\Gamma_0))\subset K.$ By standard short time existence theorem, we know that there exists a time $t^*>0$ and a family $\{\Sigma_t\}_{0\leq t<t^*}$ of smooth hypersurfaces evolving from $\Sigma_0=\Gamma_0$ according to classical motion by general curvature. In particular for each
$0\leq t<t^*,$ $\Sigma_t$ is diffeomorphic to $\Gamma_0,$ and it's a boundary of an open set $U_t$ diffeomorphic to $U_0\equiv U.$

\begin{remark}
\label{cwrmk1}
In \cite{ES1}, Evans and Spruck showed that the level set mean curvature flow does not depend upon the particular choice of initial function $g(x).$
However, this is not true in our case, since our approximation $\hat{f}^n(\kappa)$ is a "good approximation" to $f(\kappa)$ only when $\kappa\in K.$
\end{remark}

\begin{theorem}
\label{thcw1}
Let $\{\Sigma_t\}_{0\leq t<t^*}$ be a family of smooth hypersurfaces evolving from $\Sigma_0=\Gamma_0$ according to classical motion by general
curvature. Moreover, $\kappa[\Sigma_t]\in\ K,$ for all $0\leq t<t^*.$ Then there exists $\{\Gamma_t\}_{t\geq 0}$ such that $\Sigma_t=\Gamma_t\,\,(0\leq t<t^*),$ where $\{\Gamma_t\}_{t\geq 0}$ is the generalized evolution by general curvature.
\end{theorem}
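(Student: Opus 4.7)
The plan is to construct a smooth classical solution $\tilde{u}$ whose level sets are exactly the classical flow $\Sigma_t$, cut it off at heights $\pm\delta$ to obtain a globally defined weak solution, and then invoke the uniqueness theorem of Section \ref{subpws} to identify this weak solution with the generalized evolution for one specific initial datum.

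First I would select an initial function carefully adapted to $\Gamma_0$. Because $\kappa[\Gamma_0]\in K$ and $K$ is open, a smoothly cut-off signed distance function to $\Gamma_0$ yields $g\in C^\infty(\R^n)$ that is constant outside a large ball, has $\{g=0\}=\Gamma_0$, satisfies $|Dg|=1$ in a tubular neighborhood, and whose nearby level sets $\Gamma_0^s:=\{g=s\}$ still satisfy $\kappa[\Gamma_0^s]\in K$ for all $|s|\leq s_0$. Short-time existence and continuous dependence on initial data for the classical flow by $f$, applied uniformly on the compact family $\{\Gamma_0^s\}_{|s|\leq s_0/2}$, provide $T\in(0,t^*]$ and $\delta\in(0,s_0)$ such that the classical flow $\Sigma_t^s$ of $\Gamma_0^s$ exists, remains admissible, and smoothly foliates a tubular neighborhood $N$ of $\bigcup_{0\leq t\leq T}\Sigma_t\times\{t\}$ for $(s,t)\in[-\delta,\delta]\times[0,T]$.

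Next I would set $\tilde{u}(x,t)=s$ when $x\in\Sigma_t^s$. Smoothness of the foliation gives $\tilde{u}\in C^\infty(N)$ with $|D\tilde{u}|\neq 0$, and the classical evolution of each level set means $\tilde{u}$ satisfies \eqref{ws1} classically on $N$. I would then choose a smooth monotone $\Psi:\R\to\R$ with $\Psi(s)=s$ for $|s|\leq\delta/2$ and $\Psi$ constant for $|s|\geq\delta$, set $w:=\Psi(\tilde{u})$ on $N$, and extend $w$ to all of $\R^n\times[0,T]$ by those two boundary constants on either side of $N$. A direct verification (or Theorem \ref{thws2} on the region where $\Psi'>0$, combined with the fact that constants are trivially both weak sub- and supersolutions) shows that $w$ is a globally defined weak solution of \eqref{ws1}, constant at spatial infinity.

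Finally I would invoke uniqueness with initial datum $\hat{g}:=w(\cdot,0)$, which is continuous, satisfies $\{\hat{g}=0\}=\Gamma_0$, and is constant outside a large ball. Theorem \ref{thint1} produces a weak solution $\hat{u}$ with $\hat{u}(\cdot,0)=\hat{g}$ that is constant outside a space-time ball, and Theorem \ref{thws4} then forces $\hat{u}\equiv w$ on $\R^n\times[0,T]$, so $\Gamma_t:=\{\hat{u}(\cdot,t)=0\}=\Sigma_t$ for $0\leq t\leq T$; a standard continuation argument on the open subset of times in $[0,t^*)$ where $\Sigma_t=\Gamma_t$, using admissibility of $\Sigma_t$ throughout $[0,t^*)$, extends the equality to the full interval. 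The chief obstacle, flagged in Remark \ref{cwrmk1}, is that because $\hat{f}^n$ equals $f$ only on $K$ the weak solution is not level-set invariant and depends on $g$; so one must exhibit \emph{some} initial datum realizing $\Gamma_t=\Sigma_t$, and the technically delicate point is keeping every auxiliary level set $\Sigma_t^s$ admissible throughout $[0,T]$. This in turn rests on openness of $K$ and parabolic regularity of the classical foliation, after which the comparison machinery of Section \ref{subpws} closes the argument.
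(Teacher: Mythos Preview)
Your approach is correct and takes a genuinely different route from the paper. The paper never flows a foliation: it works only with the single classical evolution $\{\Sigma_t\}$ and its signed distance $d(x,t)$, shows that $\underline{d}=\alpha e^{-\lambda t}d$ is a classical subsolution of the \emph{approximate} equation (the one involving $\hF^n$ and $\gamma^{\e}$) on the one-sided tube $Q^+$, compares $\underline{d}$ with the smooth approximants $u^{\e,n}$ via the ordinary maximum principle on the parabolic boundary of $Q^+$, and then passes to the limit to get $u>0$ in $Q^+$ and $u<0$ in $Q^-$, whence $\Gamma_t=\Sigma_t$ on a short interval, followed by the same continuation step you use. By contrast, you bypass the approximation layer entirely: you manufacture an explicit smooth solution $\tilde u$ of the exact level-set PDE from a foliation of classical flows, flatten it with $\Psi$, and then invoke the weak uniqueness Theorem~\ref{thws4} directly.

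The paper's barrier method buys economy of hypotheses---it needs only the one classical flow and elementary distance-function calculus, not uniform-in-$s$ short-time existence or continuous dependence for the family $\{\Gamma_0^s\}$---whereas your construction is conceptually more transparent once that parabolic machinery is granted, and it makes the role of Remark~\ref{cwrmk1} (the dependence on the choice of $g$) completely explicit. One small technical adjustment: choose $\Psi$ constant already for $|s|\ge\delta'$ with some $\delta'<\delta$, so that $w$ is locally constant in a full neighborhood of $\partial N$. Then the weak-solution property of $w$ is immediate everywhere: on $\text{int}(N)$ it follows from the local form of Theorem~\ref{thws2} applied to the classical solution $\tilde u$, and on the complement $w$ is locally constant, which is a weak solution because no admissible test function can touch a constant from above (any such $\phi$ would have $D\phi=0$ and $D^2\phi\le 0$ at the contact point, forcing $\kappa[\gamma_\eta D^2\phi\,\gamma_\eta]\notin K$).
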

\begin{proof}
1. Fix $0<t_0<t^*,$ and define then for $0\leq t\leq t_0$ the signed distance function
\be\label{cw1}
d(x, t)\equiv\left\{
\begin{aligned}
&-\text{dist}(x, \Sigma_t) \,\,\mbox{if $x\in U_t$,}\\
&\text{dist}(x, \Sigma_t)\,\,\mbox{if $x\in\R^n\setminus\bar{U}_t.$}
\end{aligned}
\right.
\ee
As $\Sigma\equiv \bigcup\limits_{0\leq t\leq t_0}\Sigma_t\times\{t\}$
is smooth, $d$ is smooth in the regions
\[Q^+\equiv\{(x, t)|0\leq d(x, t)\leq \delta_0,\,\,0\leq t\leq t_0\},\]
\[Q^-\equiv\{(x, t)|-\delta_0\leq d(x, t)\leq 0,\,\,0\leq t\leq t_0\}\]
for $\delta_0>0$ sufficiently small.

2. Now if $\delta_0>0$ is small enough, for each point $(x, t)\in Q^+$
there exists a unique point $y\in \Sigma_t$ such that $d(x, t)=|x-y|.$
Consider now near $(y, t)$ the smooth unit vector field $\nu\equiv Dd$
pointing from $\Sigma$ into $Q^+.$ Then
\be\label{cw2}
d_t(x, t)=f(\kappa_1, \cdots, \kappa_{n-1}, 0)
\ee
where $\kappa_1, \cdots, \kappa_{n-1}$ denote the principal curvature of $\Sigma_t$ at the point $y,$
calculated with respect to $-\nu.$
Moreover the eigenvalues of $D^2d(x, t)$ are
\[\lt\{\frac{\kappa_1}{1+\kappa_1d}, \cdots, \frac{\kappa_{n-1}}{1+\kappa_{n-1}d}, 0\rt\},\]
one can see that when $\delta_0$ is sufficiently small
\[\lt(\frac{\kappa_1}{1+\kappa_1d}, \cdots, \frac{\kappa_{n-1}}{1+\kappa_{n-1}d}, 0\rt)\subset K.\]
Therefore we have,
\be\label{cw3}
F\lt(\gamma^{ik}_dd_{kl}\gamma^{lj}_d\rt)=f\lt(\frac{\kappa_1}{1+\kappa_1d}, \cdots, \frac{\kappa_{n-1}}{1+\kappa_{n-1}d}, 0\rt),
\ee
and
\be\label{cw4}
d_t-F\lt(\gamma^{ik}_dd_{kl}\gamma^{lj}_d\rt)=\sum\limits_{i=1}^{n-1}f_i(\bar{\kappa}_i)\frac{\kappa_i^2}{1+\kappa_id}\cdot d.
\ee
Since $\kappa_i$ is uniformly bounded and $d\geq 0$ in $Q^+,$ let
\be\label{cw5}
\underline{d}\equiv\alpha e^{-\lambda t}d,
\ee
then $\ud$ satisfies
\be\label{cw6}
\ud_t-F\lt(\gamma^{ik}_{\ud}\ud_{kl}\gamma^{lj}_{\ud}\rt)\leq 0\,\,\mbox{ in $Q^+$,}
\ee
if $\lambda>0$ is chosen to be large enough and $\alpha>0$ will be determined later.
Furthermore, we have $|Dd|^2=|\nu|^2=1,$ $d_id_{ij}=0,$ and
$\lt(\frac{\kappa_1}{1+\kappa_1d}, \cdots, \frac{\kappa_{n-1}}{1+\kappa_{n-1}d}, 0\rt)\subset K $ in $Q^+.$
By \eqref{cw6} we get when $n$ large and $\e>0$ small,
\be\label{cw7}
\ud_t-\hF^n\lt(\gamma^{\e ik}_{\ud}{\ud}_{kl}\gamma^{\e lj}_{\ud}\rt)\leq 0\,\,\mbox{in $Q^+$.}
\ee
Therefore, we see that $\ud$ is a smooth subsolution of the approximate general curvature evolution equation in $Q^+.$

3. Choose any Lipschitz function $g:\R^n\goto \R$ so that $g(x)=d(x, 0)$
near $\Sigma_0,$ $\{x\in\R^n| g(x)=0\}=\Sigma_0,$ and $g(x)$ is a positive constant for large $|x|.$
For $0<\e<1$ and $n$ large, the approximating PDE has a continuous solution $u^{\e, n},$
which is $C^{2, 1}$ in $\R^n\times (0, \infty).$ Additionally, passing to a subsequence if necessary, we have
$u^{\e, n}\goto u$ locally uniformly. In the following we denote
\be\label{cw8}
\Gamma_t=\{x\in\R^n|u(x, t)=0\}, \,\,t\geq 0.
\ee
Now $u=g=\delta_0>0$ on $\{(x, 0)|\text{dist}(x, \Sigma_0)=\text{dist}(x, \Gamma_0)=\delta_0\};$
and as $u$ is continuous, we have
\be\label{cw9}
u\geq\delta_0/2>0\,\,\mbox{on $\{(x, t)| d(x, t)=\delta_0\}$}
\ee
for $0\leq t\leq t_0,$ provided $t_0>0$ is small enough. Hence we have
\be\label{cw10}
\uen\geq\delta_0/4\,\,\mbox{on $\{(x, t)|d(x, t)=\delta_0\}$}
\ee
for $0\leq t\leq t_0,$ $0<\e\leq\e_0,$ and $n> N,$ where $\e_0>0$ is sufficiently small and $N$ is sufficiently large.
Consequently, there exists $0<\alpha<1$ so that
\be\label{cw11}
\uen\geq\ud\,\,\mbox{on $\{(x, t)| d(x, t)=\delta_0\}$}
\ee
for $0\leq t\leq t_0,$ $0<\e\leq\e_0,$ and $n> N.$ Since $0<\alpha<1,$ we have
\be\label{cw12}
\uen\geq\ud,\,\,\mbox{on $\{(x, 0)|0\leq d(x, 0)\leq \delta_0\}$.}
\ee
Moreover, $g\geq 0$ implies $\uen\geq 0$ and so $\uen\geq\ud$ on $\{(x, t)|d(x, t)=0\}.$

4. Therefore we see that $\uen\geq\ud$ on the parabolic boundary of $Q^+.$
By the maximum principle we have $\uen\geq\ud$ in $Q^+.$ Let $(\e, n)\goto (0, \infty)$ we conclude
$u>0$ in the interior of $Q^+.$

Next, considering $\ud=\alpha e^{-\lambda t}d$ in the interior of $Q^-$ instead, by a similar argument we can show $u<0$ in $Q^-.$ Since $u>0$ in $Q^+$
and $u<0$ in $Q^-$ we have
\be\label{cw13}
\Gamma_t\subseteq\Sigma_t=\{x|d(x, t)=0\}\,\,(0\leq t\leq t_0).
\ee

5. Now, let $\Gamma_t=\{x\in\R^n|u(x, t)=0\}\,\,(t\geq 0).$
Since $g<0$ in $U_0$ we know by continuity that $u<0$ somewhere in $U_t,$
provided $0\leq t\leq t_0$ and $t_0$ is small. Similarly, $u>0$ somewhere in
$\R^n-\bar{U}_t$ for each $0\leq t\leq t_0.$ Fix any point and draw a smooth curve C in $\R^n,$
intersecting $\Sigma_t$ precisely at $x_0$ and connecting a point $x_1\in U_t,$ where $u(x_1, t)<0,$
to a pint $x_2\in \R^n-\bar{U}_t$ where $u(x_2, t)>0.$ As $u$ is continuous, we must have
$u(x, t)=0$ for some point $x$ on the curve C. However, from step 4 we know
\be\label{cw14}
\{x|u(x, t)=0\}\subseteq\Sigma_t.
\ee
Thus, $u(x_0, t)=0,$ which implies $\Gamma_t=\Sigma_t$ if $0\leq t\leq t_0.$

We have demonstrated that the classical motion $\{\Sigma_t\}_{0\leq t<t^*}$
and the generalized motion $\{\Gamma_t\}_{t\geq 0}$ agree at least on some short time interval $[0, t_0].$

6. Write
\be\label{cw15}
s\equiv\sup\limits_{0\leq t<t^*}\{t|\Gamma_{\tau}=\Sigma_{\tau}\,\, \mbox{for all $0\leq\tau\leq t$}\},
\ee
and suppose $s<t^*.$ Then $\Gamma_t=\Sigma_t$ for all $0\leq t<s,$ and so, applying the continuity of the solution $u$,
we have $\Sigma_s\subseteq\Gamma_s.$ On the other hand, if $x\in\R^n-\Sigma_s,$
then there exists $r>0$ such that $B(x, r)\subset\R^n-\Sigma_t$ for all $s-\e\leq t\leq s,$ $\e>0$
small enough. This implies that $x\not\in\Gamma_s.$ Therefore we have $\Gamma_s=\Sigma_s=\{x\in\R^n| u(x, s)=0\}.$
Let's denote $\hat{g}(x)=u(x, s),$ by continuity we know $\hat{g}<0$ in $U_s$ and $\hat{g}(x)>0$ in $\R^n\setminus\bar{U}_s.$
Moreover, by Theorem \ref {thew2} we know there exists a weak solution $\hat{u}$ of \eqref{ws1} such that
\[\hat{u}(x, t-s)=u(x, t)=\text{constant} \,\,\mbox{on $\R^n\times [0, \infty)\cap\{|x|+t\geq R\},$}\]
and
\[\hat{u}(x, 0)=u(x, s)=\hat{g}(x).\]
By the Comparison Theorem \ref {thws3} we also know that $\hat{u}(x, t-s)=u(x, t)$ for $t\geq s.$
On the other hand, we can apply steps 1-5 and deduce $\hat{\Gamma}_{t-s}=\Sigma_t$ for all
$s\leq t\leq s+s_0<t^*,$ if $s_0>0$ is small enough, which leads to a contradiction.
\end{proof}

From the above proof, we can see that the crutial point in our argument is to find an initial function $g(x)$ satisfies
$\kappa(A[\Gamma_t^g])\subset K,$ for $t\in[-\e_0, \e_0],$ where $\Gamma_t^g=\{x\in\R^n|g(x)=t\}.$
\bigskip

\section{Initial surface with positive general curvature}
\label{secci}
\setcounter{equation}{0}
In this section, we are going to assume that $\Gamma_0$ is a smooth connected hypersurface with $\kappa[A(\Gamma_0)]\subset K$, which is the boundary of
a bounded open set $U\subset\R^n.$ We will solve the general curvature equation \eqref{ew14} by separating variables.
Following the idea in \cite{ES1}, we will show that there exists a weak solution of \eqref{ew14} that can be represented as
\be\label{ci1}
u(x, t)\equiv v(x)+t,\,\,x\in U, t>0,
\ee
where $v$ is the unique weak solution of the stationary problem
\be\label{ci2}
F\lt(\gamma^{ik}v_{kl}\gamma^{lj}\rt)=1\,\,\mbox{in $U$,}
\ee
\be\label{ci3}
v=0\,\,\mbox{on $\partial U=\Gamma_0.$}
\ee
In this case, we have
\[\Gamma_t=\{x\in U| v(x)=-t\},\,\, t^*>t\geq 0,\]
and $\Gamma_t=\emptyset$ for $t>t^*\equiv\|v\|_{L^\infty(U)}.$
We informally interpret our PDE \eqref{ci2} as implying $\Gamma_t$ has positive general curvature for $0\leq t<t^*.$

Before carrying out the foregoing program rigorously, let's first give a definition of the weak solution to \eqref{ci2}.
\begin{defin}\label{cidf.1}
$v\in C(\bar{U})$ is a weak solution to \eqref{ci2} provided that for each $\phi\in \Lambda(D)\cap C^{\infty}(\R^n),$  if $v-\phi$ has a local maximum (minimum) at a point $x_0\in D\subset U,$ then
\be\label{ci.1'}
\lt\{\begin{aligned}
&F(\gamma^{ik}\phi_{kl}\gamma^{lj})\geq (\leq)1\\
&\mbox{at $x_0$ if $D\phi(x_0)\neq 0$}\\
\end{aligned}\rt.
\ee
and
\be\label{ci.2'}
\lt\{\begin{aligned}
&F(\gamma^{ik}_\eta\phi_{kl}\gamma^{lj}_\eta)\geq (\leq)1\,\,\mbox{at $x_0$ if $D\phi(x_0)= 0$}\\
&\mbox{for some $\eta\in\R$ with $|\eta|\leq 1.$}\\
\end{aligned}\rt.
\ee
\end{defin}

Equivalently we have
\begin{defin}\label{cidf.1'}
$v\in C(\bar{U})$ is a weak solution to \eqref{ci2} if whenever $x_0\in U$ and
\be\label{ci.3'}
v(x)\leq (\geq) v(x_0)+p\cdot(x-x_0)+\frac{1}{2}(x-x_0)^{\top}R(x-x_0)+o(|x-x_0|^2)\,\,
\mbox{as $x\goto x_0$}
\ee
for some $p\in\R^n$ and $R=\{r_{ij}\}\in S^{n\times n},$ then
\be\label{ci.4'}
F(\gamma^{ik}_pr_{kl}\gamma^{lj}_p)\geq (\leq) 1\,\,\mbox{at $x_0$ if $p\neq 0$ and $\kappa[\gamma^{ik}_pr_{kl}\gamma^{lj}_p]\in K$,}
\ee
and
\be\label{ci.5'}
F(\gamma^{ik}_\eta r_{kl}\gamma^{lj}_\eta)\geq (\leq) 1\,\,\mbox{at $x_0$ if $p=0$ for some $|\eta|\leq 1$ and $\kappa[\gamma^{ik}_pr_{kl}\gamma^{lj}_p]\in K$.}
\ee
\end{defin}

\begin{theorem}\label{thci0}There exists a unique weak solution $v$ of equations \eqref{ci2} and \eqref{ci3}. Furthermore,  there is constant $A>0$
so that
\be\label{ci6}
\begin{aligned}
-A\, \text{dist}(x, \Gamma_0)\leq v(x) &\leq0 \,\,\,(x\in\bar{U}),\\
|Dv(x)|&\leq A.
\end{aligned}
\ee
\end{theorem}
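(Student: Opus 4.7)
The approach mirrors Section~\ref{secew}: introduce an approximate uniformly elliptic Dirichlet problem, control its solutions with distance-function barriers drawn from $\kappa[\Gamma_0]\subset K$, pass to a viscosity limit by the stability argument of Theorem~\ref{thew2}, and deduce uniqueness from comparison. For $\sigma,\e\in(0,1)$ and large $n$, let $\us$ be the unique smooth solution of
\[
\sigma\Delta \us+\hF^n(\gamma^{\e ik}\us_{kl}\gamma^{\e lj})=1\text{ in }U,\qquad \us=0\text{ on }\partial U,
\]
furnished by Krylov-Safonov-Evans theory since $\hF^n$ is concave with $\hf^n_i>0$ everywhere. At an interior maximum of $\us$ one has $D^2\us\le 0$ and $\Delta\us\le 0$, while $\hf^n(0)=\inf_{\lambda}\{f(\lambda)-Df(\lambda)\cdot\lambda\}=0$ by the degree-one homogeneity of $f$ and Euler's identity; this contradicts the right-hand side~$1$, yielding $\us\le 0$ in $U$.

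For the lower bound, let $d(x)=-\operatorname{dist}(x,\Gamma_0)$ for $x\in\bar U$. On a one-sided collar $V$ of $\Gamma_0$ inside $U$, $d$ is smooth with $|Dd|=1$, and the eigenvalues of $\gamma^{ik}d_{kl}\gamma^{lj}$ are $\{\kappa_i/(1+\kappa_id)\}\cup\{0\}$, where the $\kappa_i$ are the principal curvatures of $\Gamma_0$ at the foot point. Since $\kappa[\Gamma_0]\subset K$ and $K$ is open, these eigenvalues lie in $K$ throughout $V$ and $F(\gamma^{ik}d_{kl}\gamma^{lj})\ge c_0>0$ there by continuity. By the degree-one homogeneity of $F$, the function $\uv\equiv Ad$ with $A$ sufficiently large is a classical subsolution of $F(\gamma^{ik}v_{kl}\gamma^{lj})=1$ in $V$; for $\e$ small, $n$ large, and $\sigma$ small (chosen in that order) $\hF^n$ coincides with $F$ on the bounded matrix range involved, the $\gamma^\e$ converge to $\gamma$, and the $\sigma\Delta\uv$ term is absorbed, so $\uv$ is also a subsolution of the approximate equation in $V$. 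Gluing to the constant $-A\delta_0$ outside $V$ via an inf-convolution cutoff produces a continuous global subsolution that vanishes on $\Gamma_0$, and comparison against $\us$ gives $\us\ge\uv\ge -A\operatorname{dist}(x,\Gamma_0)$.

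For the Lipschitz estimate, differentiating the approximate equation in $x_k$ yields a linear uniformly elliptic equation for $\partial_k\us$, so $\max_{\bar U}|D\us|$ is attained on $\partial U$, where the two-sided barrier $|\us|\le A\operatorname{dist}(\cdot,\Gamma_0)$ gives a Lipschitz constant of~$A$, uniform in $\sigma,\e,n$. Sending $\sigma\to 0$ as in step~4 of Theorem~\ref{thew1} yields $v^{\e,n}\in C^2$ with the same bounds; extracting a subsequence $v^{\e_k,n_k}\to v$ locally uniformly produces $v\in C(\bar U)$ satisfying \eqref{ci6}. Verifying that $v$ is a weak solution in the sense of Definition~\ref{cidf.1} is then the viscosity-stability analysis of Theorem~\ref{thew2}---cases $D\phi\ne 0$ versus $D\phi=0$ with auxiliary unit vectors $\eta^k$---only with the time-derivative term absent. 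Uniqueness follows from the elliptic analogue of the comparison Theorem~\ref{thws3}.

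The main obstacle I anticipate is admissibility of the globally-extended sub-barrier: away from the collar $V$, level sets of the signed distance need not have curvature in $K$, so the truncated $\uv$ may fail \eqref{ws2}-\eqref{ws3} at points where $D\uv=0$. The resolution uses both the flexibility $|\eta|\le 1$ built into Definition~\ref{wsde1} and the fact that $\hF^n$ is defined and concave on all symmetric matrices---precisely the reason the approximation \eqref{ew1} was introduced---so that the comparison step transferring the lower bound through the approximation survives outside the admissible region. This is the stationary analogue of the distance-function computation \eqref{cw1}-\eqref{cw7} carried out in Section~\ref{seccw}.
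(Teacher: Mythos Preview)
Your overall architecture---approximate by the globally elliptic $\hF^n$--equation, establish $C^0$ and Lipschitz bounds via distance barriers and differentiation, then pass to the limit and invoke comparison for uniqueness---is exactly the paper's. The $\sigma$-regularization is harmless extra work; the paper works directly with \eqref{ci4}--\eqref{ci5}.

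The genuine gap is your lower barrier. You yourself compute $\hf^n(0)=0$ from Euler's identity; this means a constant function satisfies $\hF^n(\gamma^\e D^2(\text{const})\gamma^\e)=0<1$ and is therefore a strict \emph{supersolution}, not a subsolution. Hence ``gluing $Ad$ to the constant $-A\delta_0$ outside $V$'' cannot produce a global subsolution, no matter how the inf-convolution cutoff is arranged, and the flexibility in Definition~\ref{wsde1} does not help---the failure is at the level of the classical approximate equation, not at the level of viscosity admissibility. Equivalently, to run comparison in the collar alone you would need $\us\ge -A\delta_0$ on the inner boundary $\{d=\delta_0\}$, which is precisely the estimate you are trying to prove. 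The paper breaks this circularity by replacing the linear profile with the logarithmic one
\[
\omega(x)=\lambda\, g(d(x)),\qquad g(t)=\log\!\Big(\frac{2\delta_0-t}{2\delta_0}\Big),
\]
on the collar $V_{2\delta_0}$. One checks via concavity of $\hF^n$ and the identity $g''=(g')^2$ that $\hF^n(\gamma^{\e}\!\cdot D^2\omega\cdot\gamma^{\e})\ge \lambda c_1>1$ for $\lambda$ large. The point is that $\omega\to -\infty$ (and $\partial\omega/\partial\tilde\nu=-\infty$) as $d\to 2\delta_0$, so the inequality $v^{\e,n}>\omega$ on the inner boundary holds trivially for any bounded $v^{\e,n}$, and the maximum principle in the collar alone yields $v^{\e,n}\ge -A\,d$ near $\Gamma_0$ without ever extending the barrier into the interior. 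The global lower bound $v^{\e,n}\ge -A\,\text{dist}(\cdot,\Gamma_0)$ then follows \emph{a posteriori} from the gradient estimate, exactly as you outline.
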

\begin{proof}
1. Similar to Section \ref{secew}, we will study the following approximate PDE instead:
\be\label{ci4}
\hF^n\lt(\gamma^{\e ik}v^\e_{kl}\gamma^{\e lj}\rt)=1\,\,\mbox{in $U$,}
\ee
\be\label{ci5}
v^{\e, n}=0\,\,\mbox{on $\partial U=\Gamma_0,$}
\ee
where $0<\e<1$ small and $n>N_0$ is a large integer. It's easy to see that
\be\label{ci14}
v^{\e, n}\leq 0,\,\,\mbox{in $U$.}
\ee
In the following, we will construct a lower barrier for
\eqref{ci4} of the form
\[\omega(x)=\lambda g(d(x))\,\,\,(\lambda\in\R, d(x)=\text{dist}(x, \Gamma_0), x\in U)\]
in a small neighborhood $V_{2\delta_0}\equiv\{x\in U| 0<d(x)<2\delta_0\}$ of $\Gamma_0.$ We choose $\delta_0>0$ small  such that in $V_{2\delta_0},$
$d(x)$ is smooth and
$\kappa[-D^2d]\subset K.$

By a straightforward calculation we get
\be\label{ci7}
\begin{aligned}
\hF^n[\gamma^{\e ik}_\omega\omega_{kl}\gamma^{\e lj}_\omega]&=\hF^{n, ij}\gamma^{\e ik}_\omega\omega_{kl}\gamma^{\e lj}_\omega\\
&=\hF^{n, ij}\gamma^{\e ik}_\omega(\lambda g'd_{kl}+\lambda g''d_kd_l)\gamma^{\e lj}_\omega\\
&=\lambda g'\hF^{n, ij}\gamma^{\e ik}_\omega d_{kl}\gamma^{\e lj}_\omega
+\lambda g''\hF^{n, ij}\gamma^{\e ik}_\omega d_kd_l\gamma^{\e lj}_\omega\\
&\geq \lambda \hf^n(\kappa[g'd_{kl}])+\frac{\e^2\lambda g''\hF^{n, ij}d_id_j}{\lambda^2g'^2+\e^2},\\
\end{aligned}
\ee
where in the last inequality we used the concavity of $\hF^n$ and $d_id_jd_{ij}\equiv 0.$

Now choose
\be\label{ci10}
g(t)=\log\lt(\frac{2\delta_0-t}{2\delta_0}\rt);
\ee
then $g(t)$ is convex on $[0, 2\delta_0)$ and satisfies
\be\label{ci11}
g(0)=0,\,\,g'\leq-1/(2\delta_0),\,\,g''=g'^2,\,\,g'(2\delta_0)=-\infty.
\ee
Therefore, by \eqref{ci7} we have,
\be\label{ci12}
\hF^n[\gamma^{\e ik}_\omega\omega_{kl}\gamma^{\e lj}_\omega]
\geq \lambda c_1>1
\ee
when $\lambda$ large.
Since $\frac{\partial\omega}{\partial\tilde{\nu}}=-\infty$ on $\{d=2\delta_0\},$
where $\tilde{\nu}$ denotes the exterior normal to $V_{2\delta_0},$ we can apply
the maximum principle to conclude that
\[v^{\e, n}>\omega\,\,\mbox{in $V_{2\delta_0}.$}\]
Thus
\be\label{ci13}
v^{\e, n}(x)\geq -Ad(x),\,\,x\in V_{\delta_0}\subset V_{2\delta_0}.
\ee
Equation \eqref{ci13} yields $|Dv^{\e, n}|\leq A$ on $\Gamma_0.$ By differentiating \eqref{ci4}
with respect to $x_l,$ we see that any derivative $v^{\e, n}_l$ achieves its maximum and minimum on
$\Gamma_0.$ Therefore, $|Dv^{\e, n}|\leq A$ in $U$ and in particular we have $v^{\e, n}\geq -Ad$ in $U.$

2. By step 1 we have
\be\label{ci15}
\sup\limits_{0<\e<1, n>N_0}\|v^{\e, n}\|_{C^{0, 1}}<\infty.
\ee
Hence we may extract a subsequence $\{v^{\e_k, n_k}\}_{k=1}^\infty$
so that when $k\goto\infty,$ $\e_k\goto 0,$ $n_k\goto\infty,$ and $v^{\e_k, n_k}\goto v$ uniformly in $\bar{U}.$
As in the proof of Theorem \ref{thew2}, we verify that $v$ is a weak solution of \eqref{ci2}.

3. The uniqueness of this weak solution $v$ will follow from the following comparison theorem.
\end{proof}

\begin{theorem}
\label{thci1}
Assume $u$ is a weak subsolution and $v$ is a weak supersolution of \eqref{ci2} and \eqref{ci3}, then we have that
\[u\leq v\,\,\mbox{in $U$.}\]
\end{theorem}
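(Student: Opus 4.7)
The plan is to adapt the standard Crandall--Ishii doubling-of-variables comparison, using Definition \ref{cidf.1'}, and following the template of \cite{ES1}. To obtain the strict gap needed to close the argument, observe first that for any $\delta > 0$ the function $u_\delta := (1+\delta)u$ is itself a weak subsolution, but of the strict inequality $F(\gamma^{ik}_p r_{kl}\gamma^{lj}_p) \ge 1+\delta$, and still vanishes on $\partial U$. This uses the positive $1$-homogeneity of $F$ together with the fact that $\gamma^{ij}_p$ depends on $p$ only through its direction, so that a quadratic expansion of $u$ at a point as in \eqref{ci.3'} transforms by the scalar factor $(1+\delta)$ under $u \mapsto (1+\delta)u$. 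It therefore suffices to prove $u_\delta \le v$ in $\bar U$ for every $\delta > 0$ and then send $\delta \to 0$.

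Suppose for contradiction that $M := \max_{\bar U}(u_\delta - v) > 0$; since $u_\delta = v = 0$ on $\partial U$, $M$ is attained at an interior point. For $\alpha \gg 1$ introduce
\[
\Phi_\alpha(x,y) := u_\delta(x) - v(y) - \tfrac{\alpha}{2}|x-y|^2
\]
and let $(x_\alpha,y_\alpha)$ maximize $\Phi_\alpha$ over $\bar U \times \bar U$. Standard estimates yield $(x_\alpha,y_\alpha) \in U \times U$ for all large $\alpha$, $\alpha|x_\alpha-y_\alpha|^2 \to 0$, and $\Phi_\alpha(x_\alpha,y_\alpha) \to M$. The Crandall--Ishii maximum principle for semicontinuous functions produces $p_\alpha := \alpha(x_\alpha-y_\alpha)$ and symmetric matrices $X_\alpha \le Y_\alpha$ realizing, respectively, an upper quadratic expansion of $u_\delta$ at $x_\alpha$ and a lower one of $v$ at $y_\alpha$ in the sense of Definition \ref{cidf.1'}. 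When $p_\alpha \ne 0$ and $\kappa[\gamma^{ik}_{p_\alpha}(X_\alpha)_{kl}\gamma^{lj}_{p_\alpha}] \in K$, the inequality $\gamma_{p_\alpha} X_\alpha \gamma_{p_\alpha} \le \gamma_{p_\alpha} Y_\alpha \gamma_{p_\alpha}$ combined with Weyl's monotonicity theorem for eigenvalues and the cone property \eqref{ws9} forces $\kappa[\gamma_{p_\alpha} Y_\alpha \gamma_{p_\alpha}] \in K$ as well, and the monotonicity \eqref{ws6} together with the strict subsolution and supersolution conditions yields
\[
1+\delta \;\le\; F(\gamma_{p_\alpha} X_\alpha \gamma_{p_\alpha}) \;\le\; F(\gamma_{p_\alpha} Y_\alpha \gamma_{p_\alpha}) \;\le\; 1,
\]
a contradiction.

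The degenerate case $p_\alpha = 0$ is handled by the same mechanism used in Theorem \ref{thws1}: the sub- and supersolution properties furnish unit vectors $\eta^\pm \in \overline{B_1}$ through \eqref{ci.5'}, and passing to a subsequence these converge to a common limit $\eta$, after which the monotonicity argument above applies with $\gamma_\eta$ in place of $\gamma_{p_\alpha}$. The genuine obstacle I anticipate is the possibility that $\kappa[\gamma_{p_\alpha} X_\alpha \gamma_{p_\alpha}] \notin K$, in which case the subsolution inequality is vacuous and the direct contradiction fails. To remove this difficulty I would run the argument against the globally elliptic approximation $\hat f^n$ of \eqref{ew1} rather than $f$: by adding a term $\mu I$ with $\mu \searrow 0$ to $X_\alpha$ one can push eigenvalues into $K_n$, so that $\hat f^n = f$ there, and the Weyl-based monotonicity chain above applies to $\hat F^n$. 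The strict gap $\delta$ produced in Step 1 survives the passage $\mu \to 0$, $n \to \infty$, $\alpha \to \infty$, again yielding a contradiction. This is the same mechanism used by \cite{ES1} and \cite{CGG} to treat operators that fail to be globally elliptic.
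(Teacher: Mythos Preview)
Your approach differs from the paper's in three essential ways. For strictness, you scale $u \mapsto (1+\delta)u$ using $1$-homogeneity; the paper instead adds a perturbation $+\e|x+y-x_1|^2$ to the doubling functional, which produces the strict matrix gap $R-\bar R < -2\e I_n$. For regularity, you invoke Crandall--Ishii directly; the paper first passes to sup/inf-convolutions $u^\e, v_\e$ (Lemma \ref{lmci.1}), which are semiconvex/semiconcave and a.e.\ twice differentiable, and then selects a sequence of points of twice differentiability converging to the maximum. Finally, the paper uses a \emph{quartic} penalty $\delta^{-1}|y|^4$ rather than your quadratic one. The scaling device is correct and is a nice alternative to the paper's perturbation.

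There are, however, two genuine gaps. First, your handling of $p_\alpha = 0$ is wrong: in Definitions \ref{cidf.1} and \ref{cidf.1'} \emph{both} the sub- and supersolution conditions at $p=0$ furnish only \emph{some} $\eta$ with $|\eta|\le 1$, and the $\eta^+$ arising from $u_\delta$ bears no relation to the $\eta^-$ arising from $v$, so you cannot compare $F(\gamma_{\eta^+} X \gamma_{\eta^+})$ with $F(\gamma_{\eta^-} Y \gamma_{\eta^-})$. This is not the situation of Theorem \ref{thws1}, where a single approximating sequence produces a single limiting $\eta$. The paper sidesteps the issue entirely by proving $y_2 \neq 0$ in Step 3, and that step genuinely uses the quartic penalty (whose Hessian vanishes at $y=0$). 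Second, your proposed repair for the admissibility obstacle does not work. Adding a small $\mu I$ to $X_\alpha$ cannot move an eigenvalue vector lying well outside $K$ into $K_n$; and since the subsolution property of $u$ says nothing about jets with $\kappa \notin K$, there is no inequality $\hat F^n(\gamma X \gamma) \ge 1+\delta$ to start the chain. Dually, $\hat f^n \ge f$ on $K$, so the supersolution inequality for $v$ does not transfer to $\hat F^n$ either. The paper's route is organized so that the contradiction $0 \le F^{ij}\gamma^{is}_p(r-\bar r)_{sl}\gamma^{lj}_p < 0$ is reached at a.e.\ points of twice differentiability via Lemma \ref{lmci.1}(viii), rather than through the abstract jet condition and the extended operator $\hat F^n$.
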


Before proving this theorem, let's first state the following lemma which will be used later.
\begin{lemma}
\label{lmci.1}
Let
\be\label{ci16}
\begin{aligned}
\omega^\e(x)&=\sup\limits_{y\in\R^n}\{\omega(y)-\e^{-1}|x-y|^2\},\\
\omega_\e(x)&=\inf\limits_{y\in\R^n}\{\omega(y)+\e^{-1}|x-y|^2\}.\\
\end{aligned}
\ee
Then there exists constants $A, B, C,$ depending only on $\|\omega\|_{L^\infty},$ such that for $\e>0$ the following hold:\\
(i) $\omega_\e\leq\omega\leq \omega^\e$ on $\R^n.$\\
(ii)$\|\omega^\e, \omega_\e\|_{L^{\infty}}(\R^n)\leq A.$\\
(iii) If $y\in \R^n$ and $\omega^\e(x)=\omega(y)-\e^{-1}|x-y|^2$ then $|x-y|\leq C\e^{1/2}\equiv\sigma(\e).$\\
(iv) $\omega^\e, \omega_\e\goto\omega$ as $\e\goto 0^+,$ uniformly on compact subset of $\R^n.$\\
(v) $\text{Lip}(\omega^\e), \text{Lip}(\omega_\e)\leq B/\e.$\\
(vi) The mapping $x\mapsto\omega^\e+\e^{-1}|x|^2$ is convex and the mapping
$x\mapsto\omega_\e-\e^{-1}|x|^2$ is concave.\\
(vii) Assume $\omega$ is a weak subsolution of \eqref{ci2} in $U,$ then $\omega^\e$ is a weak subsolution in $U_{\sigma(\e)}\subset U,$
where $U_{\sigma(\e)}=U\setminus\{x\in U|\text{dist}(x, \partial U)>\sigma(\e)\},$
and $U_{\sigma(\e)}\goto U$ as $\e\goto 0^+.$ Similarly, if $\omega$ is a weak supersolution of \eqref{ci2}, then $\omega_\e$ is a weak supersolution in
$U_{\sigma(\e)}.$\\
(viii) $\omega^\e$ is twice differentiable a.e. and satisfies
\[1\leq F\lt(\gamma^{ik}\omega^\e_{kl}\gamma^{lj}\rt)\]
at each point of twice differentiability in $U_{\sigma(\e)}$ where $D\omega^\e\neq 0.$ Similarly, we have
\[1\geq F\lt(\gamma^{ik}\omega_{\e kl}\gamma^{lj}\rt)\]
at each point of twice differentiability in $U_{\sigma(\e)}$ where $D\omega_\e\neq 0.$
\end{lemma}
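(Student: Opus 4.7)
Items (i)--(vi) form the standard Jensen's regularization bookkeeping and I would verify them by direct computation. For (i) take $y=x$ in each envelope. (ii) combines (i) with $\|\omega\|_{L^\infty}\le A$. For (iii), since $\omega^\e(x)\ge \omega(x)$ by (i), if $\omega^\e(x)=\omega(y)-\e^{-1}|x-y|^2$ then
\[
\e^{-1}|x-y|^2=\omega(y)-\omega^\e(x)\le \omega(y)-\omega(x)\le 2\|\omega\|_{L^\infty},
\]
giving $|x-y|\le (2\e\|\omega\|_{L^\infty})^{1/2}$. Then (iv) follows from (iii) plus the uniform continuity of $\omega$ on compact sets. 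For (vi), rewrite
\[
\omega^\e(x)+\e^{-1}|x|^2=\sup_{y\in\R^n}\bigl\{\omega(y)-\e^{-1}|y|^2+2\e^{-1}x\cdot y\bigr\},
\]
a supremum of affine functions in $x$, hence convex; symmetrically $\omega_\e-\e^{-1}|x|^2$ is concave. Finally (v) follows since a bounded convex (resp.\ concave) function is locally Lipschitz with constant $\le B/\e$.

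The real content is (vii). I argue the subsolution case; the supersolution case is symmetric. Let $\phi\in\Lambda(D)\cap C^\infty$ and suppose $\omega^\e-\phi$ has a local maximum at $x_0\in D\subset U_{\sigma(\e)}$. Pick $y_0\in\R^n$ with $\omega^\e(x_0)=\omega(y_0)-\e^{-1}|x_0-y_0|^2$; by (iii) $|x_0-y_0|\le\sigma(\e)$, so $y_0\in U$. For $y$ near $y_0$, set $x=y+(x_0-y_0)$, which lies near $x_0$; then
\[
\omega(y)-\e^{-1}|x_0-y_0|^2\le \omega^\e(x)\le \omega^\e(x_0)+\phi(x)-\phi(x_0),
\]
so $\omega(y)-\tilde\phi(y)\le \omega(y_0)-\tilde\phi(y_0)$ where $\tilde\phi(y)\equiv\phi(y+x_0-y_0)$. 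Since $\tilde\phi$ is merely a translate of $\phi$, $D\tilde\phi(y_0)=D\phi(x_0)$ and $D^2\tilde\phi(y_0)=D^2\phi(x_0)$, so $\tilde\phi\in\Lambda(\tilde D)$ for $\tilde D=D+(y_0-x_0)$; applying the subsolution property of $\omega$ at $y_0$ to $\tilde\phi$ transfers the required inequality back to $\phi$ at $x_0$. If the local maximum is not strict, add the harmless perturbation $|x-x_0|^4$ exactly as in the proof of Theorem \ref{thws1}.

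Finally (viii) follows from Alexandrov's theorem: by (vi), $\omega^\e$ is semiconvex, hence twice differentiable a.e.\ in $U_{\sigma(\e)}$. At such a point $x_0$ with $D\omega^\e(x_0)\ne 0$ and $\kappa[\gamma^{ik}\omega^\e_{kl}\gamma^{lj}]\in K$, testing $\omega^\e$ against its second-order Taylor polynomial (with an $|x-x_0|^4$ perturbation) via Definition \ref{cidf.1'} yields $F(\gamma^{ik}\omega^\e_{kl}\gamma^{lj})\ge 1$; the argument for $\omega_\e$ is identical. The one genuinely delicate step is (vii), where everything hinges on the translation $y\mapsto y+(x_0-y_0)$ and the fact that translations preserve gradients and Hessians, and thus admissibility of the test function; once (vii) is in hand, (viii) is a formal consequence of Alexandrov's theorem combined with Definition \ref{cidf.1'}.
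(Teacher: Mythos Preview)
The paper does not actually prove this lemma; it simply states that the argument is similar to Lemma~3.1 of \cite{ES1} and leaves the details to the reader. Your proposal is precisely the standard Evans--Spruck/Jensen sup/inf-convolution argument that \cite{ES1} uses, correctly adapted to the present setting: in particular, the key translation step in (vii) preserves $D\phi$ and $D^2\phi$ and hence the admissibility of the test function, which is the only new wrinkle here compared with \cite{ES1}. So your approach coincides with what the paper intends.
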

The proof of this Lemma is similar to Lemma 3.1 of \cite{ES1}; we leave it to the reader.

\begin{proof} ( proof of theorem \ref{thci1})
In this proof, we extend $u(x)$ and $v(x)$ to $\R^n$ by letting $u(x)=v(x)=0$ on $\R^n\setminus U.$
We are going to prove this theorem using proof by contradiction.

1.  If $u\not\leq v,$ then we would have
\be\label{ci17}
\max\limits_{x\in \R^n}(u-v)\equiv u(x_0)-v(x_0)\equiv a>0,
\ee
for some $x_0\in U.$
Fix $\e>0$ small; we get that
\be\label{ci18}
\max\limits_{x\in \R^n}(u^\e-v_\e)\equiv u^\e(x_1)-v_\e(x_1)\geq a>0,
\ee
for some $x_1\in U.$

2. Given $\delta>0$ small, define
\be\label{ci19}
\max\limits_{x\in \bar{U}, y\in\bar{U}-x}\Phi(x, y)\equiv u^\e(x+y)-v_\e(x)-\delta^{-1}|y|^4+\e|x+y-x_1|^2.
\ee
Then it's easy to see that
\be\label{ci20}
\max\limits_{x\in \bar{U}, y\in\bar{U}-x}\Phi(x, y)\geq a>0.
\ee
Suppose $\Phi(x_2, y_2)=\max\limits_{x\in \bar{U}, y\in\bar{U}-x}\Phi(x, y).$ Then we have that
$|y_2|\leq C\delta^{1/4}.$
Moreover, it's easy to see that $x_2\in U_{\sigma(\e)}.$
According to Lemma \ref{lmci.1} (vii), when $\delta>0$ is small enough,  we have $u^\e$ is a weak subsolution of \eqref{ci2} near $x_2+y_2$ and
$v_\e$ is a weak supersolution of \eqref{ci2} near $x_2.$

3. We now demonstrate that $y_2\neq 0.$ If $y_2=0,$ then we would have
\[u^\e(x_2)-v_\e(x_2)+\e|x_2-x_1|^2\geq u^\e(x_2+y)-v_\e(x_2)-\delta^{-1}|y|^4+\e|x_2+y-x_1|^2,\]
which yields,
\be\label{ci21}
u^\e(x_2+y)\leq u^\e(x_2)+\delta^{-1}|y^4|+2\e\lt<x_1-x_2, y\rt>.
\ee
Since $u^\e$ is a weak subsolution of \eqref{ci2} near $x_2,$ by Definition \ref{cidf.1'} we obtain $1\leq 0.$ This leads to a contradiction.

4. Since $\Phi$ attains its local maximum at $(x_2, y_2),$ there exists a sequence $(x^k, y^k)\goto (x_2, y_2)$
as $k\goto\infty.$ Moreover, $\Phi, u^\e$ and $v_\e$ are twice differentiable at $(x^k, y^k).$ Then we have that
\[D_{x, y}\Phi(x^k, y^k)\goto 0,\] and
\[D^2_{x, y}\Phi(x^k, y^k)\leq o(1)I_{2n}.\]
Furthermore,
\be\label{ci22}
D_x\Phi=Du^\e(x^k+y^k)-Dv_\e(x^k)+\e D|x^k+y^k-x_1|^2=p^k-\bar{p}^k,
\ee
and
\be\label{ci23}
D_y\Phi=p^k-\frac{4}{\delta}|y^k|^2y^k,
\ee
where $p^k=Du^\e(x^k+y^k)+\e D|x^k+y^k-x_1|^2,$ $\bar{p}^k=Dv_{\e}(x^k).$
Let $p\equiv \frac{4}{\delta}|y_2|^2y_2,$ then we have $p^k, \bar{p}^k\goto p\neq 0.$
Moreover, we have that
\[D^2_x\Phi\equiv R^k-\bar{R}^k+2\e I_n, R^k-\bar{R}^k\leq \e_kI_n-2\e I_n,\,\, \mbox{where $\e_k\goto 0$ as $k\goto\infty$}.\]
This together with Lemma \ref{lmci.1} (vi) yields
\[-C(\e)I_n\leq R^k\leq\bar{R}^k+\e_k I_n-2\e I_n\leq C(\e)I_n.\]
 Consequently we may suppose that
$R^k\goto R,$ $\bar{R}^k\goto \bar{R}.$
By our assumption we get
\[R-\bar{R}<-2\e I_n,\]
\[1\leq F\lt(\gamma^{is}_{p^k}r^k_{sl}\gamma^{lj}_{p^k}\rt),\]
and
\[1\geq F\lt(\gamma^{is}_{\bar{p}^k}\bar{r}^k_{sl}\gamma^{lj}_{\bar{p}^k}\rt),\]
for all $k.$ Let $k\goto \infty$ we obtain that
\[0\leq F^{ij}\gamma^{is}_p(r-\bar{r})_{sl}\gamma^{lj}_{p}<0,\]
leads to a contradiction.
\end{proof}

\bigskip

\section{Non-collapsing result for general curvature flow}
\label{secnc}
\setcounter{equation}{0}

\subsection{Non-collapsing result}
In this subsection, we will prove the Ben-Andrews' Non-collapsing result (see \cite{A, ALM}) under the elliptic setting. Our approach is inspired by
the perturbation idea in \cite{CS}, where it is used to prove convexity properties of solutions to certain types of elliptic equations.
We hope that this approach can be generalized to a wider class of elliptic equations.

Let's first recall our equation
\be\label{nc1}
\left\{\begin{aligned}
|\nabla u|F(A[\Gamma_t])&=1,\,\,\mbox{in $\Omega\subset\R^{n+1}$}\\
u&=0,\,\,\mbox{on $\partial\Omega$},\\
\end{aligned}\rt.
\ee
where $F(A[\Gamma_t])=f(\kappa[\Gamma_t]),$ $\Gamma_t=\{x\in\Omega\mid u(x)=-t\}.$

The main result of this section is the following:
\begin{theorem}\label{thnc1}
Let $\Omega$ be a compact manifold, and $\partial\Omega$ satisfies the $\delta$-Andrews' condition: at each point of $x\in\partial\Omega$
there exists a ball in $\Omega$ with radius $\delta/F_x,$ where $\delta>0$ is fixed, and $F_x$ is $F(A[\partial\Omega])$ at the point $x.$ Let $u$ be a solution of equation
\eqref{nc1}. Then each level set of $u$ satisfies the $\delta$-Andrews condition.
\end{theorem}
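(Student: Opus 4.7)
The plan is to prove the Andrews condition first for the smooth, uniformly elliptic approximants $v^{\e,n}$ built in Section \ref{secci} (or $u^{\e,n}$ from Section \ref{secew} in the evolving case) via a two-point maximum principle in the spirit of \cite{CS}, and then to pass to the limit. Given a smooth admissible $v$ solving $|\nabla v|F = 1$ in $\Omega$, let $\nu(x)=\nabla v(x)/|\nabla v(x)|$ and $F(x)=F(A[\Gamma_{v(x)}])(x)$, and introduce
\[
Z(x,y)=\frac{F(x)\,|x-y|^{2}}{2\,\langle x-y,\nu(x)\rangle},
\]
defined on the ``double'' $D=\{(x,y)\in\bar\Omega\times\bar\Omega:v(x)=v(y),\,x\neq y\}$. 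A direct computation shows that the $\delta$-Andrews condition on the level set $\Gamma_{v(x)}$ is exactly $Z(x,y)\ge\delta$ for every admissible $y$, and the diagonal limit $y\to x$ reproduces the principal-curvature bound $\kappa_{\max}(x)\le F(x)/\delta$.

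The goal is therefore to establish $\inf_{D}Z\ge\delta$. On the boundary portion $\partial\Omega\times\partial\Omega$ this is the hypothesis. Following \cite{CS}, I would replace $Z$ with a perturbation $Z-\eta\phi$ for a smooth cut-off $\phi$ vanishing on $\partial\Omega\times\partial\Omega$ and small $\eta>0$, and then let $\eta\to 0^{+}$ at the end; this reduces the claim to ruling out an off-diagonal interior minimum of $Z$ with $Z<\delta$. The diagonal case is handled separately, by restricting to a neighborhood of the diagonal and using $\kappa[\Gamma_{v(x)}]\in K$ together with $F(1,\dots,1)=1$ to propagate the principal-curvature inequality inward from $\partial\Omega$.

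Suppose such an interior minimum is attained at $(x_{0},y_{0})$, $x_{0}\ne y_{0}$. Since $\kappa\in K$ along the corresponding level set, $|\nabla v|=1/F>0$ at both points, so the constraint $v(x)=v(y)$ has a nondegenerate Lagrange multiplier $\mu$: $D_{x}Z=\mu\nabla v(x_{0})$, $D_{y}Z=-\mu\nabla v(y_{0})$. These force $\nu(x_{0})$ and $\nu(y_{0})$ to be aligned with the chord $y_{0}-x_{0}$, so $(x_{0},y_{0})$ is placed diametrically on a would-be tangent sphere of radius smaller than $\delta/F(x_{0})$. The second-order step is to apply the sum of linearized curvature operators $F^{ij}(x_{0})\partial_{x_{i}x_{j}}+F^{ij}(y_{0})\partial_{y_{i}y_{j}}$ to $Z-\eta\phi$ at $(x_{0},y_{0})$: the nonnegativity coming from the constrained minimum competes against the identity obtained by differentiating $|\nabla v|F=1$ twice, and concavity of $f$ in \eqref{ws7} together with homogeneity \eqref{ws11} yields a strictly reversed inequality, giving the contradiction. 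Stability of weak solutions (Theorem \ref{thws1}) and consistency (Theorem \ref{thcw1}) then transfer the bound to the weak level-set flow.

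The main obstacle is this second-order computation in the fully nonlinear regime. For the mean-curvature case Andrews' evolution of $Z$ simplifies to a clean Bochner-type identity, but for a general concave $F$, after inserting the first-order conditions, the cross terms linking $x_{0}$ and $y_{0}$ must be shown to combine with the ``mixed'' term $-F^{ij}(x_{0})F^{kl}(y_{0})v_{x_{i}x_{j}}v_{y_{k}y_{l}}$ into an expression strictly negative whenever $Z<\delta$. The strictness is where the perturbation $\eta\phi$ is crucial: it supplies the slack needed to absorb the borderline terms, after which concavity of $f$ strictly off the ray $(1,\dots,1)$ closes the loop before one lets $\eta\to 0^{+}$.
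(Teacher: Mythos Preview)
Your outline shares the two-point maximum-principle philosophy with the paper, but the execution diverges at the two places that matter, and as written it would not close.

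First, your first-order claim is incorrect. From $D_yZ=-\mu\nabla v(y_0)$ one does \emph{not} obtain that $\nu(x_0)$ and $\nu(y_0)$ are both parallel to the chord. What the first-order conditions actually give (and what the paper derives, see \eqref{nc26}--\eqref{nc28}) is the relation $F_{x_0}\,d\,w+\delta\nu_{x_0}=\delta\nu_{y_0}$, so $\nu_{y_0}$ lies in the plane spanned by $\nu_{x_0}$ and $w$ and makes a specific angle $\theta$ with them. This angle enters every subsequent line of the second-order computation; assuming $\theta=0$ throws away exactly the terms that produce the contradiction.

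Second, and more seriously, your proposed second-order test operator $F^{ij}(x_0)\partial_{x_ix_j}+F^{ij}(y_0)\partial_{y_iy_j}$ is the wrong object. The paper (following \cite{ALM}) contracts the full $(x,y)$-Hessian of the perturbed two-point function $\hat Z$ against the \emph{single} matrix $F^{ij}_{x_0}$, including the cross block: it computes $F^{ij}_{x_0}\bigl(\hat Z_{x_ix_j}+2\langle\partial_x^i,\partial_y^j\rangle\hat Z_{x_iy_j}+\hat Z_{y_iy_j}\bigr)$. Using $F^{ij}_{x_0}$ on the $y$-block is precisely what makes concavity usable, because it produces the term $F_{y_0}-F^{ij}_{x_0}h^y_{ij}$, which is nonpositive by \eqref{ws7} and \eqref{ws11}. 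If you pair $\partial_{y_iy_j}$ with $F^{ij}(y_0)$ instead, that term collapses to zero and you lose the sign, and there is no ``mixed'' quantity $-F^{ij}(x_0)F^{kl}(y_0)v_{x_ix_j}v_{y_ky_l}$ in the computation to rescue you. The paper's perturbation is also different from yours: rather than subtracting a cut-off $\eta\phi$, it subtracts a carefully chosen quadratic $\lambda|\alpha(x-x_0)-(y-y_0)|^2$ with $\alpha$ determined by the first-order data (see \eqref{nc3'}--\eqref{nc25}), and the strictness in \eqref{nc35} comes from a final perturbation of $\delta$, not from letting $\eta\to0$. Finally, the paper formulates the problem as minimizing $U(x,y)=u(x)-u(y)$ over the admissible ball-region $\mathcal A$, with the polynomial $Z(x,y)=\tfrac{F_x}{2}|x-y|^2+\delta\langle x-y,\nu_x\rangle$ as an auxiliary constraint function, rather than minimizing your ratio $Z$ on the level-set diagonal $\{v(x)=v(y)\}$; the two are equivalent in spirit but the bookkeeping differs.
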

\begin{proof}
Now we are going to define an admissible set $\mathcal{A}$ as following:
\[\mathcal {A}=\{(x, y):y\in B_r(C_x), r=\delta/F_x, C_x=x+r\nu_x,\,\,\mbox{and $x\in\Omega$}\},\]
where $\nu_x=-\frac{\nabla u}{|\nabla u|}$ is the interior normal to the level set
$\Gamma_{u(x)}=\{y\in\Omega: u(y)=u(x)\}.$
To prove the level set $\Gamma_t$ of equation \eqref{nc1} is non-collapsing is equivalent to prove
\be\label{nc7}
\inf\limits_{(x, y)\in\mathcal{A}}U(x, y)=u(x)-u(y)\geq 0.
\ee
We will prove \eqref{nc7} by contradiction. In the following we assume there
exists $(x_0, y_0)\in\Omega,$ such that
\be\label{nc8}
\inf\limits_{(x, y)\in\mathcal{A}}=U(x_0, y_0)<0.
\ee
Let $x_s=x_0+sF_{x_s}\nu_{x_s}$ and $y_s=y_0+sF_{y_s}\nu_{y_s}.$ Then we have that
\be\label{nc9}
\begin{aligned}
u(x_0)&=u(x_s-sF_{x_s}\nu_{x_s})\\
&=u(x_s)-sF_{x_s}\nabla u(x_s)\cdot\nu_{x_s}+O(s^2)\\
&=u(x_s)+s+O(s^2).\\
\end{aligned}
\ee
Similarly, we get that
\be\label{nc9'}
u(y_0)=u(y_s-sF_{y_s}\nu_{y_s})=u(y_s)+s+O(s^2).
\ee
Since $\|y-C_x\|^2-\lt(\frac{\delta}{F_x}\rt)^2=\frac{2}{F_x}Z(x, y),$
where
\be\label{nc10}
Z(x, y)=\frac{F_x}{2}\lt<x-y, x-y\rt>+\delta\lt<x-y, \nu_x\rt>.
\ee
It's easy to see that $Z(x_s, y_s)\geq 0$. Otherwise, by \eqref{nc9} and \eqref{nc9'}, we can perturb $(x_0, y_0)$
to some $(x, y)\in\mathcal{A}$ such that $U(x, y)<U(x_0, y_0).$
Therefore,
\be\label{nc11}
\frac{\partial}{\partial s}Z(x_s, y_s)\mid_{s=0}\, =0.
\ee
Differentiating $u(x_s)=u(x_0)-s+O(s^2)$ with respect to $s$ we get
\be\label{nc12}
\nabla u\cdot\frac{dx_s}{ds}\mid_{s=0}\,=-1.
\ee
Thus,
\be\label{nc13}
\frac{d}{ds}x_s\mid_{s=0}\,=F_{x_0}\nu_{x_0},
\ee
\be\label{nc14}
\frac{d}{ds}\tau^i_{x_s}\mid_{s=0}\,=(\nabla_iF_{x_0})\nu_{x_0}-F_{x_0}h^l_i\tau^l_{x_0},
\ee
and
\be\label{nc15}
\frac{d}{ds}\nu_{x_s}\mid_{s=0}\,=-\nabla F_{x_0}=-\nabla_iF_{x_0}\tau^i_{x_0}.
\ee
Differentiating equation \eqref{nc15} we have that
\be\label{nc16}
\frac{d}{ds}\nu^i_{x_s}\mid_{s=0}\,=-\lt(\nabla_i\nabla_j F_{x_0}\rt)\tau^j_{x_0}
-\lt(\nabla_iF_{x_0}\rt)h_{ij}\nu_{x_0}.
\ee
On the other hand,
\be\label{nc17}
\nu^i_{x_s}\mid_{s=0}\,=-h^l_i\tau_l,
\ee
which implies that
\be\label{nc18}
\frac{d}{ds}\nu^i_{x_s}\mid_{s=0}\,=-\lt(\frac{d}{ds}h^l_i\mid_{s=0}\rt)\tau_l
-h^l_i\lt(\nabla_lF_{x_0}\rt)\nu_{x_0}+F_{x_0}h^l_ih^r_l\tau^r_{x_0}.
\ee
Hence,
\be\label{nc19}
\frac{d}{ds}h^j_i\mid_{s=0}=\nabla_i\nabla_jF_{x_0}+F_{x_0}h^l_ih_l^j
\ee
and
\be\label{nc20}
\frac{d}{ds}F_{x_s}\mid_{s=0}=F^{ij}_{x_0}\nabla_i\nabla_jF_{x_0}+F^{ij}_{x_0}h^l_ih_l^jF_{x_0}.
\ee

Now, let $d=\|x_0-y_0\|$ and $w=\frac{x_0-y_0}{d}.$ We have that
\be\label{nc21}
\begin{aligned}
\lt.\frac{dZ}{ds}\rt|_{s=0}&=\frac{d^2}{2}\frac{d}{ds}F_{x_s}+F_{x_0}\lt<\frac{dx_s}{ds}-\frac{dy_s}{ds}, dw\rt>\\
&+\delta\lt<\frac{dx_s}{ds}-\frac{dy_s}{ds}, \nu_{x_0}\rt>+\delta\lt<x_0-y_0, \frac{d\nu_{x_s}}{ds}\rt>\\
&=\frac{d^2}{2}[F_{x_0}^{ij}\nabla_i\nabla_jF_{x_0}+F^{ij}_{x_0}h^l_ih_l^jF_{x_0}]
+F_{x_0}\lt<F_{x_0}\nu_{x_0}-F_{y_0}\nu_{y_0}, dw\rt>\\
&+\delta\lt<F_{x_0}\nu_{x_0}-F_{y_0}\nu_{y_0}, \nu_{x_0}\rt>+\delta\lt<dw, -\nabla F_{x_0}\rt>=0.\\
\end{aligned}
\ee
Therefore,
\be\label{nc22}
\begin{aligned}
\delta d\lt<w, \nabla F_{x_0}\rt>&=\frac{d^2}{2}[F_{x_0}^{ij}\nabla_i\nabla_jF_{x_0}+F^{ij}_{x_0}h^l_ih_l^jF_{x_0}]\\
&+F_{x_0}\lt<F_{x_0}\nu_{x_0}-F_{y_0}\nu_{y_0}, dw\rt>+\delta F_{x_0}-\delta F_{y_0}\lt<\nu_{x_0}, \nu_{y_0}\rt>.\\
\end{aligned}
\ee
Next, we are going to consider the perturbation in the tangential direction.
Let $x^\e=x_0+\e\tau_{x_0}$ and $y^{\eta}=y_0+\eta\tau_{y_0}.$ Then we have that
\be\label{nc23}
u(y^\eta)-u(x^\e)=u(y_0)-u(x_0)+O(\e^2)+O(\eta^2).
\ee
Similar to previous arguments, we have that $Z(x^\e, y^\eta)\geq 0$ in a neighborhood of $(x_0, y_0)$
on $T_{x_0}\Gamma_{u(x_0)}\times T_{y_0}\Gamma_{u(y_0)}.$

Now choose local orthonormal coordinates on
$\Gamma_{u(x_0)}\times\Gamma_{u(y_0)}$ at $(x_0, y_0),$ such that
$\partial_{x_0}^i=\partial_{y_0}^i$ for $i=1, \cdots, n-1$ and $\partial_{x_0}^n,$ $\partial_{y_0}^n$ are coplanar with
$\nu_{x_0},$ $\nu_{y_0}.$ Also denote $\theta$ as the angle between $-w$ and $\nu_{x_0}.$

We are going to estimate
$Z(x^\e, y^\eta)-Z(x_0, y_0).$ For simplicity, we only compute the case when $x^\e=x_0+\e\txn$ and
$y^\eta=y_0+\eta\tyn;$ the general case can be computed in the same way.
First, let's assume that
\[F_{x^\e}=F_{x_0}+2f_1\e+2f_2\e^2+o(\e^2),\]
and
\[\nu_{x^\e}=(1-b_1\e^2)\nu_{x_0}+b_2\e\txn+P_{\text{span}\{\nu_{x_0}, \txn\}^\bot}(\nu_{x^\e}).\]
Then, we have that
\be\label{nc24}
\begin{aligned}
&Z(x^\e, y_0)-Z(x_0, y_0)\\
&=\lt(\frac{F_{x^\e}}{2}-\frac{F_{x_0}}{2}\rt)d^2+F_{x^\e}\e\lt<dw, \txn\rt>+\frac{F_{x^\e}}{2}\e^2\\
&+\delta\lt<dw, \nu_{x^\e}-\nu_{x_0}\rt>+\delta\e\lt<\txn, \nu_{x^\e}\rt>\\
&=(f_1\e+f_2\e^2)d^2-\lt(F_{x_0}\e+2f_1\e^2\rt)d\sin\theta\\
&+\frac{F_{x^\e}}{2}\e^2+\delta d\lt<w, -b_1\e^2\nu_{x_0}+b_2\e\txn\rt>+\delta\e^2\lt<\txn, b_2\txn\rt>+o(\e^2)\\
&=f_1\e d^2+f_2d^2\e^2-F_{x_0}d\sin\theta\e-2f_1d\sin\theta\e^2\\
&+\frac{F_{x^\e}}{2}\e^2+\delta db_1\cos\theta\e^2-\delta b_2d\sin\theta\e+\delta b_2\e^2+o(\e^2).\\
\end{aligned}
\ee
Since $Z(x^\e, y_0)-Z(x_0, y_0)\geq 0$ we get that
\be\label{nc1'}
f_1d^2-F_{x_0}d\sin\theta-\delta b_2d\sin\theta=0,
\ee
and
\be\label{nc2'}
f_2d^2-2f_1d\sin\theta+\frac{F_{x^\e}}{2}+\delta db_1\cos\theta+\delta b_2\geq 0.
\ee
Now consider
\be\label{nc3'}
\begin{aligned}
&Z(x^\e, y^\eta)-Z(x_0, y_0)\\
&=\lt(\frac{F_{x^\e}}{2}-\frac{F_{x_0}}{2}\rt)d^2+F_{x^\e}d\lt<w, \e\txn-\eta\tyn\rt>\\
&+\frac{F_{x^\e}}{2}\lt<\e\txn-\eta\tyn, \e\txn-\eta\tyn\rt>+\delta\lt<dw, \nue-\nu_{x_0}\rt>+\delta\lt<\e\txn-\eta\tyn, \nue\rt>\\
&=\lt(f_2d^2-2f_1d\sin\theta+\delta db_1\cos\theta+\delta b_2\rt)\e^2
+\lt(\frac{2f_1\delta}{F_{x_0}}\sin2\theta+\delta b_2\cos2\theta\rt)\e\eta\\
&+\frac{F_{x^\e}}{2}\lt<\e\txn-\eta\tyn, \e\txn-\eta\tyn\rt>+\text{higher order terms}\\
&\geq\lambda\lt<\alpha(x-x_0)-(y-y_0), \alpha(x-x_0)-(y-y_0)\rt>,\\
\end{aligned}
\ee
where the last inequality comes from $Z(x^\e, y^\eta)-Z(x_0, y_0)\geq 0.$ Moreover, we have that $\lambda>0$ depends on $x_0,$
and $\alpha$ depends on $\delta,$ $x_0,$ and $\theta.$

Therefore, in a small neighborhood of $(x_0, y_0)$
on $T_{x_0}\Gamma_{u(x_0)}\times T_{y_0}\Gamma_{u(y_0)},$ there is a small $\lambda>0$ such that
\be\label{nc25}
\hZ(x, y)=Z(x, y)-\lambda\lt<\alpha(x-x_0)-(y-y_0), \alpha(x-x_0)-(y-y_0)\rt>\geq 0,
\ee
and $\hZ(x_0, y_0)=0.$

In the following, all calculations are done at the point $(x_0, y_0).$ Also, these calculations are similar to \cite{ALM}. For reader's convenience we include it here.

Let's compute the first and second derivatives of $\hZ(x, y).$ First, differentiating equation \eqref{nc25} with respect to $y$ we get that
\be\label{nc26}
\begin{aligned}
\frac{\partial\hZ}{\partial y_i}&=\frac{\partial Z}{\partial y_i}-2\lambda\lt<-\partial^i_y, \alpha(x-x_0)-(y-y_0)\rt>\\
&=F_x\lt<-\partial^i_y, dw\rt>+\delta\lt<-\partial^i_y, \nu_x\rt>\\
&=\lt<-\partial^i_y, F_xdw+\delta\nu_x\rt>=0,
\end{aligned}
\ee
which implies $F_{x_0}dw+\delta\nu_{x_0}\parallelsum \nu_{y_0}.$

By a direct calculation we obtain that
\be\label{nc27}
\begin{aligned}
\|F_{x_0}dw+\delta\nu_{x_0}\|^2&=F^2_{x_0}d^2+2\delta F_{x_0}d\lt<w, \nu_{x_0}\rt>+\delta^2\\
&=F^2_{x_0}\cdot\frac{2\delta}{F_{x_0}}\cos\theta\cdot d-2\delta F_{x_0}d\cos\theta+\delta^2\\
&=\delta^2.
\end{aligned}
\ee

Combining \eqref{nc26} and \eqref{nc27} we get
\be\label{nc28}
F_{x_0}dw+\delta\nu_{x_0}=\delta\nu_{y_0}.
\ee

Next, we compute the first derivative of $\hZ(x, y)$ with respect to $x,$
\be\label{nc28}
\begin{aligned}
&\frac{\partial\hZ}{\partial x_i}=\frac{\partial Z}{\partial x_i}-2\lambda\alpha\lt<\partial_x^i, \alpha(x-x_0)-(y-y_0)\rt>\\
&=\frac{d^2}{2}\nabla_iF_{x_0}+F_{x_0}\lt<\partial^i_x, dw\rt>-\delta\lt<dw, h^x_{ip}\partial^p_x\rt>=0.\\
\end{aligned}
\ee

Finally, we compute the second derivative of $\hZ(x, y).$ Differentiating equation \eqref{nc25} with respect to $x$ twice we have that
\be\label{nc29}
\begin{aligned}
\frac{\partial^2\hZ}{\partial x_i\partial x_j}&=\frac{d^2}{2}\nabla_i\nabla_j F_{x_0}+\nabla_iF_{x_0}\lt<\partial_x^j, dw\rt>
+\nabla_jF_{x_0}\lt<\partial^i_x, dw\rt>\\
&+F_{x_0}\lt<h^x_{ij}\nu_x, dw\rt>+F_{x_0}\lt<\partial^i_x, \partial^j_x\rt>-\delta\lt<\partial^j_x, h^x_{ip}\partial_x^p\rt>\\
&-\delta\nabla_jh^x_{ip}\lt<dw, \partial_x^p\rt>-\delta\lt<dw, h^x_{ip}h^x_{pj}\nu_x\rt>-2\alpha^2\lambda\delta_{ij}\\
&=\frac{d^2}{2}\nabla_{ij}F_{x_0}+2d\nabla_iF_{x_0}\lt<\partial_x^j, w\rt>+F_{x_0}h^x_{ij}d\lt<w, \nu_x\rt>\\
&-\delta h^x_{ij}-\delta d\nabla_jh^x_{ip}\lt<w, \partial^p_x\rt>-\delta dh^x_{ip}h^x_{pj}\lt<w, \nu_x\rt>+F_{x_0}\delta_{ij}-2\alpha^2\lambda\delta_{ij}.\\
\end{aligned}
\ee
Differentiating equation \eqref{nc25} with respect to $x$ and $y$ we have that
\be\label{nc30}
\begin{aligned}
\frac{\partial^2\hZ}{\partial x_i\partial y_j}&=
\lt<dw, -\partial^j_y\rt>\nabla_i F_{x_0}-F_{x_0}\lt<\partial^i_x, \partial^j_y\rt>
-\delta\lt<-\partial^j_y, h^x_{ip}\partial^p_x\rt>+2\alpha\lambda\lt<\partial_{x_0}^i, \partial_{y_0}^j\rt>\\
&=-d\nabla_iF_{x_0}\lt<w, \partial^j_y\rt>-F_{x_0}\lt<\partial^i_x, \partial^j_y\rt>+\delta h^x_{ip}\lt<\partial^j_y, \partial^p_x\rt>+2\alpha\lambda\lt<\partial_{x_0}^i, \partial_{y_0}^j\rt>.\\
\end{aligned}
\ee
Differentiating equation \eqref{nc25} with respect to $y$ twice we have that
\be\label{nc31}
\begin{aligned}
\frac{\partial^2\hZ}{\partial y_i\partial y_j}&=\frac{\partial}{\partial y_i}
\lt\{F_x\lt<-\partial^j_y, dw\rt>+\delta\lt<-\partial^j_y, \nu_x\rt>\rt\}-2\lambda\delta_{ij}\\
&=F_x\lt<-h^y_{ij}\nu_y, dw\rt>+F_x\lt<-\partial^j_y, -\partial^i_y\rt>+\delta\lt<-h^y_{ij}\nu_y, \nu_x\rt>-2\lambda\delta_{ij}\\
&=F_{x_0}\delta_{ij}-F_{x_0}h^y_{ij}d\lt<w, \nu_y\rt>-\delta h^y_{ij}\lt<\nu_x, \nu_y\rt>-2\lambda\delta_{ij}.\\
\end{aligned}
\ee

Therefore,
\be\label{nc32}
\begin{aligned}
&F^{ij}_{x_0}\lt(\frac{\partial^2\hZ}{\partial x_i\partial x_j}+2\lt<\partial^i_x, \partial^j_y\rt>\frac{\partial^2\hZ}{\partial x_i\partial y_j}\rt.
\lt.+\frac{\partial^2\hZ}{\partial y_i\partial y_j}\rt)\\
&=\frac{d^2}{2}F^{ij}_{x_0}\nabla_{ij}F_{x_0}+2dF^{ij}_{x_0}\nabla_iF_{x_0}\lt<\partial_x^j, w\rt>+F^2_{x_0}d\lt<w, \nu_x\rt>-\delta F_{x_0}\\
&-\delta dF^{ij}_{x_0}\nabla_jh^x_{ip}\lt<w, \partial^p_x\rt>-\delta dF^{ij}_{x_0}h^x_{ip}h^x_{pj}\lt<w, \nu_x\rt>
+F_{x_0}\sum\limits_iF^{ii}_{x_0}\\
&+2\lt<\partial^i_x, \partial^j_y\rt>\lt(-dF^{ij}_{x_0}\nabla_iF_{x_0}\lt<w, \partial^j_y\rt>-F_{x_0}F^{ij}_{x_0}\lt<\partial^i_x, \partial^j_y\rt>\rt.\\
&\lt.+\delta F^{ij}_{x_0}h^x_{ip}\lt<\partial^j_y, \partial^p_x\rt>\rt)+F_{x_0}\sum\limits_iF^{ii}_{x_0}-F_{x_0}F^{ij}_{x_0}h^y_{ij}\lt<w, \nu_y\rt>d\\
&-\delta F^{ij}_{x_0}h^y_{ij}\lt<\nu_x, \nu_y\rt>-2\lambda\sum\limits_iF^{ii}_{x_0}
-2\lambda\alpha^2\sum\limits_iF^{ii}_{x_0}+4\alpha\lambda\sum\limits_{i=1}^{n-1}F^{ii}_{x_0}+4\alpha\lambda\lt<\txn, \tyn\rt>^2F^{nn}_{x_0}\\
&=\frac{d^2}{2}F^{ij}_{x_0}\nabla_{ij}F_{x_0}+2dF^{nn}_{x_0}\lt<\nabla_nF_{x_0}, w\rt>+F^2_{x_0}d\lt<w, \nu_x\rt>-\delta F_{x_0}\\
&-\lt(\frac{d^2}{2}F^{ij}_{x_0}\nabla_{ij}F_{x_0}+\frac{d^2}{2}F^{ij}_{x_0}h^l_ih_{lj}F_{x_0}+F_{x_0}^2d\lt<\nu_{x_0}, w\rt>\rt.\\
&-F_{x_0}F_{y_0}d\lt<\nu_{y_0}, w\rt>+\delta F_{x_0}-\delta F_{y_0}\lt<\nu_{x_0}, \nu_{y_0}\rt>\bigg)\\
&-\delta dF^{ij}_{x_0}h^x_{ip}h^x_{pj}\lt<w, \nu_x\rt>+2F_{x_0}\sum\limits_iF^{ii}_{x_0}\\
&-2d\lt<\partial^n_x, \partial^n_y\rt>F^{nn}_{x_0}\nabla_nF_{x_0}\lt<w, \partial^n_y\rt>-2F_{x_0}F^{ij}_{x_0}\lt<\partial^i_x, \partial^j_y\rt>^2\\
&+2\delta F^{ij}_{x_0}h^x_{ip}\lt<\partial^j_y, \partial^p_x\rt>\lt<\partial^i_x, \partial^j_y\rt>-F_{x_0}F^{ij}_{x_0}h^y_{ij}d\lt<w, \nu_y\rt>\\
&-\delta F^{ij}_{x_0}h^y_{ij}\lt<\nu_x, \nu_y\rt>
-2\lambda\sum\limits_iF^{ii}_{x_0}-2\lambda\alpha^2\sum\limits_iF^{ii}_{x_0}
+4\alpha\lambda\sum\limits_{i=1}^{n-1}F^{ii}_{x_0}+4\alpha\lambda\lt<\txn, \tyn\rt>^2F^{nn}_{x_0}.\\
\end{aligned}
\ee
Here we used equation \eqref{nc22}.
From equation \eqref{nc28} we have that

\be\label{nc33}
\begin{aligned}
\nabla_nF_{x_0}&=\frac{2\delta}{d^2}\lt<dw, h^x_{nn}\partial^n_x\rt>-\frac{2F_{x_0}}{d^2}\cdot d\lt<\partial^n_x, w\rt>\\
&=\frac{2\delta}{d}h^x_{nn}\lt<w, \partial^n_x\rt>-\frac{2F_{x_0}}{d}\lt<w, \partial^n_x\rt>,\\
\end{aligned}
\ee
 which yeilds,
\be\label{nc34}
2d\lt<\nabla_nF_{x_0}, w\rt>=4\delta h^x_{nn}\lt<w, \partial^n_x\rt>^2-4F_{x_0}\lt<w, \partial^n_x\rt>^2.
\ee
From \eqref{nc32} we get that
\be\label{nc35}
\begin{aligned}
&F^{ij}_{x_0}\lt(\frac{\partial^2\hZ}{\partial x_i\partial x_j}+2\lt<\partial^i_x, \partial^j_y\rt>\frac{\partial^2\hZ}{\partial x_i\partial y_j}\rt.
\lt.+\frac{\partial^2\hZ}{\partial y_i\partial y_j}\rt)\\
&=F^{nn}_{x_0}(4\delta h^x_{nn}\lt<w, \partial^n_x\rt>^2-4F_{x_0}\lt<w, \partial^n_x\rt>^2)-\delta F_{x_0}+F_{x_0}F_{y_0}d\lt<w, \nu_{y_0}\rt>\\
&-\delta F_{x_0}+\delta F_{y_0}\lt<\nu_{x_0}, \nu_{y_0}\rt>+2F_{x_0}\sum\limits_iF^{ii}_{x_0}
-2d\lt<\partial^n_x, \partial^n_y\rt>F^{nn}_{x_0}\nabla_nF_{x_0}\lt<w, \partial^n_y\rt>\\
&-2F_{x_0}F^{ij}_{x_0}\lt(\sum\limits_{i=1}^n\delta_{ij}-\delta_{in}+\lt<\partial^i_x, \partial^n_y\rt>^2\rt)
+2\delta(F_{x_0}-F^{nn}_{x_0}h^x_{nn})\\
&+2\delta F^{nn}_{x_0}h^{x_0}_{nn}\lt<\partial^n_x, \partial^n_y\rt>^2-F_{x_0}F^{ij}_{x_0}h^y_{ij}d\lt<w, \nu_{y_0}\rt>
-\delta F^{ij}_{x_0}h^y_{ij}\lt<\nu_x, \nu_y\rt>\\
&-2\lambda(1-\alpha)^2\sum\limits_{i=1}^{n-1}F^{ii}_{x_0}-2\lambda\lt(1-\alpha^2-2\alpha\lt<\txn, \tyn\rt>^2\rt)F^{nn}_{x_0}\\
&=-4F^{nn}_{x_0}(F_{x_0}-\delta h^x_{nn})\lt<w, \partial^n_x\rt>^2+F_{x_0}d\cos\theta(F_{y_0}-F^{ij}_{x_0}h^y_{ij})
-\cos2\theta\delta(F_{y_0}-F^{ij}_{x_0}h^y_{ij})\\
&-\lt<\partial^n_x, \partial^n_y\rt>F^{nn}_{x_0}\lt<w, \partial^n_y\rt>(4\delta h^x_{nn}\lt<w, \partial^n_x\rt>-4F_{x_0}\lt<w, \partial^n_x\rt>)
+2F_{x_0}F^{nn}_{x_0}\lt(1-\lt<\partial^n_x, \partial^n_y\rt>^2\rt)\\
&-2\delta F^{nn}_{x_0}h^x_{nn}\lt(1-\lt<\partial^n_x, \partial^n_y\rt>^2\rt)-2\lambda(1-\alpha)^2\sum\limits_{i=1}^{n-1}F^{ii}_{x_0}\\
&-2\lambda\lt(1-\alpha^2-2\alpha\lt<\txn, \tyn\rt>^2\rt)F^{nn}_{x_0}\\
&=F^{nn}_{x_0}(F_{x_0}-\delta h^x_{nn})
\lt\{-4\lt<w, \partial^n_x\rt>^2+4\lt<w, \partial^n_x\rt>\lt<w, \partial^n_y\rt>\lt<\partial^n_x, \partial^n_y\rt>+2\lt(1-\lt<\partial^n_x,\rt. \lt.\partial^n_y\rt>^2\rt)\rt\}\\
&+(F_{y_0}-F^{ij}_{x_0}h^y_{ij})\lt(2\delta\cos^2\theta-\delta\cos2\theta\rt)-2\lambda(1-\alpha)^2\sum\limits_{i=1}^{n-1}F^{ii}_{x_0}\\
&-2\lambda\lt(1-\alpha^2-2\alpha\lt<\txn, \tyn\rt>^2\rt)F^{nn}_{x_0}\leq0,\\
\end{aligned}
\ee
One can see that the last inequality achieves equality if and only if $F_{y_0}=F^{ij}_{x_0}h^{y}_{ij}$, $\alpha=1$, and $\lt<\txn, \tyn\rt>^2=1.$
Therefore, we can always perturb $\delta$ such that the last inequality is strictly less than $0$, which leads to a contradiction.
\end{proof}

\medskip
\subsection{$\alpha$-Andrews flow in viscosity sense}
Before we state our theorem, we need the following definitions, which generalizes definitions in \cite{HK} for mean curvature flow.
\begin{defin}\label{denc1}(Andrews condition)
If $\Omega\subset\R^n$ is a smooth, closed admissible domain and $\alpha>0,$ then we say $\partial\Omega$ satisfies the
$\alpha$-Andrews condition if for every $P\in\partial\Omega$ there are closed balls $\bar{B}_{int}\subseteq\Omega$
and $\bar{B}_{ext}\subseteq\R^n\setminus\text{Int}(\Omega)$ of radius at least $\frac{\alpha}{F_P}$
that are tangent to $\partial\Omega$ at $P$ from the interior and exterior of $\Omega,$ respectively. A smooth curvature flow
$\{\Gamma_t\subseteq\R^n\}_{t\in I}$ is $\alpha$-Andrews if every time slice satisfies the $\alpha$-Andrews condition.
\end{defin}

\begin{defin}\label{denc2}(Viscosity general curvature)
Let $\Omega\subseteq \R^n$ be a closed set. If $P\in\partial\Omega,$ then the viscosity general curvature of $\Omega$
at $P$ is
\be\label{nc36}
F(P)=\inf\{F_{\partial X}(P)|X\subseteq\Omega\,\, \mbox{is a compact smooth domain,}\,\, P\in\partial X \},
\ee
where $F_{\partial X}(P)$ denotes the general curvature of $\partial X$ at $P$ with respect to the inward pointing normal.
The infimum of the empty set is $\infty.$
\end{defin}

\begin{defin}\label{denc3}(Viscosity $\alpha$-Andrews condition)
A closed set $\partial\Omega$ satisfies the viscosity $\alpha$-Andrews condition if $F(P)\in[0, \infty]$
at every boundary point, and there are interior and exterior balls $\bar{B}_{int},$ $\bar{B}_{ext}$ with radius at least $\alpha/F(P)$
passing through $P.$ A level set flow $\{\Gamma_t\}$ is $\alpha$-Andrews if every time slice satisfies the viscosity $\alpha$-Andrews condition.
\end{defin}

Using these definitions, Theorem \ref{thws1}, and Theorem \ref{thci0} we can extend Andrews' Theorem to the nonsmooth setting:
\begin{theorem}\label{thnc2}
Let $\Gamma_0$ be a hypersurface satisfies the following conditions: $\Gamma_0$ can be approached by a sequence of smooth hypersurfaces, which have positive general curvature and satisfy $\alpha$-Andrews condition. If $\{\Gamma_t\}$ is a compact level set general curvature flow with initial hypersurface $\Gamma_0,$ then $\{\Gamma_t\}$ is an $\alpha$-Andrews level set flow.
\end{theorem}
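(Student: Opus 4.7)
The plan is to combine Theorem \ref{thnc1} with smooth approximation and a passage to the limit, exploiting the stability of weak solutions (Theorems \ref{thws1} and \ref{thws4}) together with the inscribed-ball comparison that is standard for concave symmetric functions of the principal curvatures.

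By hypothesis, fix smooth closed hypersurfaces $\Gamma_0^k \to \Gamma_0$ with positive general curvature and smooth $\alpha$-Andrews, and let $U_0^k$ be the bounded domain they enclose. By Theorem \ref{thci0} and the construction of Section \ref{secci}, the function $u^k(x,t) = v^k(x) + t$ is a weak solution of \eqref{ws1} on $U_0^k \times [0, \|v^k\|_\infty)$, and by Theorem \ref{thcw1} its level sets $\Gamma_t^k := \{v^k = -t\}$ coincide with the classical smooth general curvature flow of $\Gamma_0^k$. After extending $u^k$ globally with compactly supported initial data $g^k \to g$ (where $\{g = 0\} = \Gamma_0$), Theorem \ref{thws4} yields $u^k \to u$ locally uniformly, where $u$ is the weak solution defining $\{\Gamma_t\}$; consequently $\Gamma_t^k \to \Gamma_t$ in Hausdorff distance on each compact time interval. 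Applying Theorem \ref{thnc1} to every $v^k$ then shows that each smooth $\Gamma_t^k$ satisfies the classical $\alpha$-Andrews condition.

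To verify the viscosity $\alpha$-Andrews condition at an arbitrary $P \in \Gamma_t$, I would fix $\varepsilon > 0$ and pick, according to Definition \ref{denc2}, a smooth compact domain $X \subseteq \Omega_t$ (with $\Omega_t$ the interior of $\Gamma_t$) so that $P \in \partial X$ and $F_{\partial X}(P) < F(P) + \varepsilon$. Using the Hausdorff convergence $\Omega_t^k \to \Omega_t$, translate $X$ slightly to $X_k = X - w_k$ with $w_k \to 0$, chosen so that $X_k \subseteq \Omega_t^k$ and $\partial X_k$ makes first contact with $\Gamma_t^k$ at some tangency point $P_k' \to P$. Since $X_k$ is inscribed in $\Omega_t^k$ at $P_k'$, the second fundamental form comparison $\mathrm{II}_{\partial X_k}(P_k') \ge \mathrm{II}_{\Gamma_t^k}(P_k')$, combined with the symmetry and monotonicity \eqref{ws6} of $f$, gives
\[
F_{\Gamma_t^k}(P_k') \;\le\; F_{\partial X_k}(P_k') \;\longrightarrow\; F_{\partial X}(P) \;<\; F(P) + \varepsilon .
\]
The smooth $\alpha$-Andrews condition at $P_k'$ then supplies an interior ball $\bar{B}_k^{\mathrm{int}} \subseteq \bar{\Omega}_t^k$ tangent to $\Gamma_t^k$ at $P_k'$ with radius at least $\alpha/(F(P) + 2\varepsilon)$ for $k$ large. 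Extracting a subsequential Hausdorff limit gives an interior ball $\bar{B}^{\mathrm{int}} \subseteq \bar{\Omega}_t$ through $P$ of the same radius, and a diagonal argument in $\varepsilon$ upgrades the bound to $\alpha/F(P)$. The exterior ball is obtained by the symmetric argument applied from the complement.

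The main obstacle is the slide-until-tangent construction of $X_k$ and the resulting upper semicontinuity bound $\limsup_k F_{\Gamma_t^k}(P_k') \le F(P)$. The contact point $P_k'$ need not coincide with any preselected sequence approaching $P$, so one must show that after a mild mollification of $X$ the first-contact point is well defined, converges to $P$, and that the inscribed-ball comparison is applied at precisely that point rather than elsewhere on $\partial X_k$. Secondary care is needed in the degenerate case $F(P) = \infty$, where the inscribed family in Definition \ref{denc2} is empty and the viscosity $\alpha$-Andrews condition is vacuous at $P$. Once these technical points are settled, the argument above confirms that every time slice $\Gamma_t$ inherits the viscosity $\alpha$-Andrews property.
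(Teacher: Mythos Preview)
The paper does not supply a detailed proof of this theorem; it merely asserts that the result follows by combining Theorem \ref{thws1}, Theorem \ref{thci0}, and the smooth non-collapsing result (Theorem \ref{thnc1}) with the viscosity definitions \ref{denc1}--\ref{denc3}. Your proposal uses precisely these ingredients and fleshes out the intended limiting argument, so it is aligned with the paper's approach; the slide-until-tangent construction and the upper-semicontinuity of $F$ along the approximating contact points are genuine technical points that the paper leaves implicit, and your identification of them as the main obstacle is accurate.
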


\section*{Acknowledgement}
The author would like to thank Professor Joel Spruck for helpful discussions about this work.

\bigskip

\end{document}